\newcommand{\mes}{\operatorname{mes}}
\newtheorem{theorem}{Theorem}
\newtheorem{corollary}{Corollary}
\newtheorem{lemma}{Lemma}
\begin{document}

\pagestyle{myheadings}
\markboth{D. Kaliada, F. G\"otze and O. Kukso}
{Number of integer cubic polynomials with bounded discriminants}

\title{The asymptotic number of integral cubic polynomials with bounded heights and discriminants}

\author{
Dzianis Kaliada
\and 
Friedrich G\"otze
\and
Olga Kukso
}

\maketitle

\renewcommand{\thefootnote}{}

\footnote{2010 \emph{Mathematics Subject Classification}: Primary 11J25; Secondary 11J83, 11N45.}

\footnote{\emph{Key words and phrases}: polynomial discriminant, cubic polynomial, distribution of discriminants.}

\renewcommand{\thefootnote}{\arabic{footnote}}
\setcounter{footnote}{0}

\begin{abstract}
Let $P$ denote a cubic integral polynomial, and let $D(P)$ and  $H(P)$ denote the discriminant and height of $P$ respectively.
Let $N(Q,X)$ be the number of cubic integer polynomials $P$ such that $H(P)\le Q$ and $|D(P)|\le X$.
We obtain the asymptotic formula of $N(Q,X)$ for $Q^{14/5} \ll X \ll Q^4$ and as $Q\to \infty$.
Using this result, for $0\le \eta\le 0.9$ we prove that
\[
\sum_{\stackrel{\scriptstyle H(P)\le Q}{1\le |D(P)|\ll Q^{4-\eta}}}|D(P)|^{-1/2}\ \asymp Q^{2-\frac\eta3}
\]
for all sufficiently large $Q$, where the sum is taken over irreducible polynomials. This improves upon a result of Davenport who dealt with the case $\eta=0$.
We also consider an application of the main theorem to some outstanding problems of transcendental number theory.
\end{abstract}

\section{Introduction and results}
Let $P(x)=a_{n}x^{n}+\dots+a_{1}x+a_{0} \in \mathbb{Z}[x]$ denote a polynomial of degree $n$,
let $H(P) = \max_{0\le i\le n} |a_{i}|$ denote the height of $P$, and
let $\alpha_{1},\alpha_{2},\dots,\alpha_{n} \in \mathbb{C}$ denote the roots of $P$. The number
\begin{equation}\label{er1}
 D(P)=a_{n}^{2n-2}\prod_{1\le i<j\le n}(\alpha_{i}-\alpha_{j})^{2}
\end{equation}
is called the discriminant of $P$. Properties of $D(P)$ where $P$ is an integral
polynomial have numerous applications in transcendental number theory. In particular,
B.~Volkmann's proof \cite{Vol} of the cubic case of Mahler's conjecture \cite{Mahler-1932b}
was based purely on a summation formula for discriminants proved by H.~Davenport in \cite{Dav61}.
The behavior of $D(P)$ is also closely related to the problem of the separation of conjugate
algebraic numbers, which has been recently studied in some depth in \cite{Beresnevich-Bernik-Goetze-10,BM04,BM10,E04}.
Recently lower bounds for the number of integral polynomials with given heights and discriminants
(or a discriminant divisible by a large prime power) have been obtained in \cite{BGKacta, BGKlitva}.

Staying strictly within polynomials of degree 3, we obtain the asymptotic formula for the number of integral
polynomials with bounded discriminants (Theorem~\ref{thm1} below).
Using this asymptotics, we extend Davenport's summation formula (Theorem~\ref{thm2} below).
In the last section we consider an application of our main result to finding the Hausdorff dimension
of real numbers with a certain approximation property by cubic polynomials.

Throughout, $\# M$ denotes the number of elements in a set $M$, and $\mes_{k} M$ denotes the $k$--dimensional Lebesgue measure
of a set $M \subset \mathbb{R}^{n}$ ($k \le n$). We will also use the Vinogradov symbol $\ll$.
The expression $f \ll g$ is equivalent to that the inequality $f \le c g$ holds for some absolute constant $c$. The expression $f \asymp g$ indicates that $g \ll f \ll g$. The expressions like $f \ll_{x_1,\dots,x_k} g$ or $f \asymp_{x_1,\dots,x_k} g$ mean that corresponding implicit constants depend only on parameters $x_1,\dots,x_k$.

Given $n \in \mathbb{N}$, $Q>1$ and $v\ge0$, define
\begin{gather}
\mathcal{P}_{n}(Q)=\{P \in \mathbb{Z}[x] : \deg P = n, \ H(P) \le Q\},\\
\mathcal{P}_{n}(Q,v)=\{P \in \mathcal{P}_{n}(Q) : |D(P)|\le \gamma_n Q^{2n-2-2v}\}, \label{er2}
\end{gather}
where the constant $\gamma_n$ depends only on the degree $n$ and is defined by
\[
\gamma_n := \sup_{\substack{
P \in \mathbb{Z}[x]\\
\deg P = n
}} \frac{|D(P)|}{(H(P))^{2n-2}}.
\]
Note, it is known (see \cite{Waer}) that the discriminant $D(P)$ is a homogeneous polynomial of degree $2n-2$ in coefficients of $P$. Therefore, $\gamma_n < +\infty$, and $H(P)\le Q$ obviously implies $|D(P)|\le \gamma_n Q^{2n-2}$.

The following lower bound for $\# \mathcal{P}_n(Q, v)$ has been shown in \cite{BGKacta} and \cite{BBG10}:
\begin{equation}\label{vb1}
\# \mathcal{P}_n(Q, v) \gg_n Q^{n+1-2v},
\end{equation}
where $0 < v < \frac{1}{2}$. Using the recent results of Beresnevich \cite{Beresnevich-SDA1} for the number
of rational points near non-degenerate analytic manifolds in $\mathbb{R}^n$, the validity of  \eqref{vb1}
can also be extended to the range $0<v<1$.
In \cite{K10} it was proved that $\# \mathcal{P}_3(Q,v) \ll Q^{4-5v/3}$.
Heuristic arguments suggested that the estimate for $\#\mathcal{P}_3(Q,v)$ in \eqref{vb1} is the best possible up to a constant, and the result from \cite{K10} doesn't contradict it.
However, the following main result of this paper, which gives upper and lower
bounds for $\# \mathcal{P}_3(Q, v)$, shows that this expectation is clearly wrong.

Let us define the following quantity
\[
N(X) = N(Q,X) := \# \{ P \in \mathcal{P}_3(Q) : |D(P)| \leq X \}.
\]
\begin{theorem}\label{thm1}
For $X$ satisfying $0 \le X \le Q^4 / 27$, the following equality holds:
\begin{equation}
N(Q,X) = \kappa\, Q^{2/3} \, X^{5/6} + O\left(X\left|\ln\left(Q^4 / X\right)\right| + X + Q^3\right),
\end{equation}
where $\kappa$ is an absolute constant defined by
\begin{equation}\label{eq_kappa}
\kappa = (3^{4/3} - 2)\cdot\frac{2^{7/3}}{\sqrt{3}}
\left(
2\int_{1}^{\infty} \frac{dt}{\sqrt{t^3+1}+\sqrt{t^3-1}} +
\int_{-1}^{1} \sqrt{t^3+1}\,dt
\right); 
\end{equation}
an implicit constant in the symbol $O(\cdot)$ is also absolute.
\end{theorem}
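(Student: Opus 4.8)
The plan is to count the lattice points $(a_3,a_2,a_1,a_0)\in\mathbb Z^4$ with $|a_i|\le Q$ and $|D(P)|\le X$ by passing to a continuous volume computation and controlling the error. First I would fix the leading coefficient $a_3=a$ with $1\le a\le Q$ (treating $a_3=0$ separately; such degenerate polynomials contribute $O(Q^3)$, absorbed in the error), and for each $a$ estimate the number of $(a_2,a_1,a_0)$ with $|a_i|\le Q$ lying in the region $\{|D(P)|\le X\}$. The discriminant of $ax^3+a_2x^2+a_1x+a_0$ is a fixed cubic form in $(a_2,a_1,a_0)$ depending polynomially on $a$; after the substitution eliminating the quadratic term (a shear $a_2\mapsto a_2$, $a_1\mapsto a_1$, $a_0\mapsto a_0$ up to rational shifts, which preserves the lattice count up to boundary terms) one reduces to the depressed cubic $x^3+px+q$ with $D=-4p^3-27q^2$, and one scales $p,q$ appropriately in $a$.

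The core is then: given the box constraints (which in the $(p,q)$ variables become an $a$-dependent region, roughly $|p|\ll Q^2/a$ type bounds after scaling) count integer points with $|4p^3+27q^2|\le X/a^{\text{(some power)}}$. I would replace the integer count by the area of the corresponding planar region plus an error of size (perimeter $+1$); the perimeter is controlled by the box side length and by the length of the discriminant curve $4p^3+27q^2=\pm\mathrm{const}$, which is where the $X|\ln(Q^4/X)|$ term will come from — the curve is long and thin near the cusp, and summing the arc-length/box contributions over $a$ from $1$ to $Q$ produces the logarithm when $X$ is small compared to $Q^4$. Then I would sum the main area terms over $a$: each contributes something proportional to $a^{-2/3}X^{5/6}$ (the exponents being forced by homogeneity of $D$ in the three lower coefficients, degree $4$, and the scaling $D\sim a\cdot(\text{stuff})$), and $\sum_{a\le Q}a^{-2/3}\asymp Q^{1/3}$, giving the $Q^{2/3}X^{5/6}$ shape; one must be careful that the box truncation $|a_i|\le Q$ is inactive for the bulk of the relevant $(p,q)$ only when $X$ is not too large, and the boundary of this truncation contributes to the $O(X)$ and $O(Q^3)$ terms.

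Computing $\kappa$ explicitly is the bookkeeping heart: after the scalings, the leading term is $Q^{2/3}X^{5/6}$ times $\big(\sum a^{-2/3}\big/Q^{1/3}\to\zeta\text{-free constant }\tfrac{3}{1}\big)$ wait, more precisely $\lim Q^{-1/3}\sum_{a=1}^{Q}a^{-2/3}=3$, times the area constant $\mathcal A=\mathrm{mes}_2\{(p,q):|{-4p^3-27q^2}|\le 1\}$ suitably normalized. I would evaluate $\mathcal A$ by slicing: for $t=p/|q|^{2/3}$-type coordinates the region splits into the part $|p|\le c$ (contributing $\int_{-1}^1\sqrt{t^3+1}\,dt$ after normalization) and the two unbounded-looking cusp parts $|p|>c$ which, because $|4p^3+27q^2|\le1$ forces $q$ to lie in a thin interval, contribute $2\int_1^\infty \frac{dt}{\sqrt{t^3+1}+\sqrt{t^3-1}}$ (using $\sqrt{t^3+1}-\sqrt{t^3-1}=\frac{2}{\sqrt{t^3+1}+\sqrt{t^3-1}}$ to rewrite the thin-strip width); the prefactor $(3^{4/3}-2)\tfrac{2^{7/3}}{\sqrt3}$ then emerges from collecting the powers of $2,3$ produced by the depression substitution $D=-4p^3-27q^2$, the rescaling to the unit region, and the $a$-sum constant.

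The main obstacle I expect is the error analysis near the cusp of the discriminant locus: there the region $\{|D|\le X/a^{k}\}$ is extremely anisotropic, so a naive "area plus perimeter" bound for the lattice count is too crude, and one must either subdivide into dyadic pieces on which the region is comparable to a box (tracking how many such pieces and their contribution to the error as $a$ and the dyadic scale vary) or invoke a more careful lattice-point count for convex/semi-algebraic regions. Getting the logarithmic factor $|\ln(Q^4/X)|$ — and not a power — out of this summation, uniformly for $0\le X\le Q^4/27$, is the delicate point; the boundary-of-box contributions and the $a_3=0$ and reducible-polynomial counts are comparatively routine and go into the $O(X)+O(Q^3)$ terms.
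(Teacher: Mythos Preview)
Your slicing-by-leading-coefficient strategy is a genuinely different route from the paper's, and your picture of where the two integrals in $\kappa$ come from (bounded part versus thin strip, rewritten via $\sqrt{t^3+1}-\sqrt{t^3-1}=2/(\sqrt{t^3+1}+\sqrt{t^3-1})$) is correct. But the scaling in your sketch does not work. The discriminant is homogeneous of degree $4$ in \emph{all four} coefficients, not in the lower three alone; with $a_3=a$ fixed it is only \emph{weighted}-homogeneous in $(a_2,a_1,a_0)$ (weights $1,2,3$), so there is no single power law ``$a^{-2/3}X^{5/6}$'' for the slice, and in any case your arithmetic does not close: $\sum_{a\le Q}a^{-2/3}\asymp Q^{1/3}$, not $Q^{2/3}$. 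Moreover the depression $x\mapsto x-a_2/(3a)$ sends $(a_1,a_0)$ to $(p,q)=(a_1-\tfrac{a_2^2}{3a},\,a_0-\tfrac{a_1a_2}{3a}+\tfrac{2a_2^3}{27a^2})$; this shear has Jacobian $1$ but shifts by rationals with denominators $a,a^2$, so it does not map $\mathbb Z^2$ to itself and ``preserves the lattice count up to boundary terms'' is unjustified. Worse, the box $|a_1|,|a_0|\le Q$ becomes an $a_2$-dependent region in $(p,q)$ whose centre is displaced by $-a_2^2/(3a)$, of order $Q^2/a\gg Q$ for small $a$; the box is therefore active throughout, and its interaction with the $a$-sum is precisely where the main term lives --- this is the missing idea, not a boundary correction.

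The paper avoids all of this by using the full four-variable homogeneity in one stroke. It applies Davenport's lattice-point principle once in $\mathbb R^4$ to obtain $N(Q,X)=Q^{4}\mes_4 V_3(X/Q^4)+O(Q^3)$, where $V_3(\delta)=\{\|\mathbf p\|_\infty\le1,\ |D(\mathbf p)|\le\delta\}$, and then computes the volume by slicing radially in the sup norm: homogeneity gives $\mes_4V_3(\delta)=\int_0^1 r^{3}\sigma(\delta/r^4)\,dr$, where $\sigma(\delta)$ is the $3$-measure of the trace on the boundary $\|\mathbf p\|_\infty=1$. The symmetries $D(-\mathbf p)=D(\mathbf p)$ and $D(p_0,\dots,p_3)=D(p_3,\dots,p_0)$ reduce this to the two faces $z=1$ and $y=1$; on each one solves the quadratic in the constant term $u$ to get an explicit strip width, and the substitution $t^3=4(y^2-3xz)^3/(27z^2\delta)$ yields $\sigma(\delta)=c_1\delta^{5/6}+O(\delta)$ with exactly the integrals you anticipated. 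The logarithmic error then drops out of $\int_{(27\delta)^{1/4}}^{1} r^{3}\cdot O(\delta r^{-4})\,dr=O(\delta|\ln\delta|)$ --- no cusp arc-length summation over $a$ is needed, and the $O(Q^3)$ lattice error is obtained once, not accumulated slice by slice.
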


\begin{corollary}\label{cor1}
For any $v \in \left[0, \frac{3}{5}\right)$ and all sufficiently large $Q$, we have
\begin{equation}\label{er27}
 \# \mathcal{P}_3(Q, v) \asymp Q^{4-\frac{5}{3}v},
\end{equation}
where an implicit constant in the symbol $\asymp$ is absolute.
\end{corollary}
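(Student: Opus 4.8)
The plan is to read off Corollary~\ref{cor1} from Theorem~\ref{thm1} by choosing $X$ equal to the discriminant threshold built into the definition \eqref{er2}. For $n=3$ we have $2n-2=4$, so \eqref{er2} says precisely that $P\in\mathcal{P}_3(Q,v)$ iff $P\in\mathcal{P}_3(Q)$ and $|D(P)|\le\gamma_3Q^{4-2v}$; hence
\[
\#\mathcal{P}_3(Q,v)=N\!\left(Q,\gamma_3Q^{4-2v}\right)
\]
holds identically, and it only remains to substitute $X=\gamma_3Q^{4-2v}$ into the asymptotic formula. The endpoint $v=0$ has to be handled separately, since by the definition of $\gamma_3$ one has $\mathcal{P}_3(Q,0)=\mathcal{P}_3(Q)$, and the trivial count of integer quadruples $(a_3,a_2,a_1,a_0)$ with $1\le a_3\le Q$ and $|a_i|\le Q$ for $i=0,1,2$ gives $\#\mathcal{P}_3(Q)\asymp Q^4$ with absolute constants, which is \eqref{er27} at $v=0$.

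For a fixed $v\in(0,\tfrac35)$ set $X=X(Q):=\gamma_3Q^{4-2v}$. Since $X/Q^4=\gamma_3Q^{-2v}\to0$ as $Q\to\infty$, the hypothesis $0\le X\le Q^4/27$ of Theorem~\ref{thm1} holds for all sufficiently large $Q$, and we may apply the theorem with this $X$. Its main term is
\[
\kappa\,Q^{2/3}X^{5/6}=\kappa\,\gamma_3^{5/6}\,Q^{\frac23+\frac56(4-2v)}=\kappa\,\gamma_3^{5/6}\,Q^{4-\frac53v},
\]
a positive constant times $Q^{4-\frac53v}$ because $\kappa>0$ and $\gamma_3>0$. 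For the error term we have $Q^4/X=\gamma_3^{-1}Q^{2v}$, so $\bigl|\ln(Q^4/X)\bigr|\ll 1+v\ln Q\ll\ln Q$, whence
\[
X\bigl|\ln(Q^4/X)\bigr|+X+Q^3\ll Q^{4-2v}\ln Q+Q^3 .
\]
Both terms on the right are $o\!\left(Q^{4-\frac53v}\right)$ as $Q\to\infty$: the first because $4-2v<4-\tfrac53v$ when $v>0$, so the extra $\ln Q$ is absorbed by the power saving $Q^{-v/3}$; the second because $3<4-\tfrac53v$ exactly when $v<\tfrac35$. Hence, for all $Q$ sufficiently large (depending on $v$), the error is at most $\tfrac12\kappa\gamma_3^{5/6}Q^{4-\frac53v}$ in absolute value, so $\#\mathcal{P}_3(Q,v)$ lies between $\tfrac12\kappa\gamma_3^{5/6}Q^{4-\frac53v}$ and $\tfrac32\kappa\gamma_3^{5/6}Q^{4-\frac53v}$; since $\kappa\gamma_3^{5/6}$ is absolute, this proves \eqref{er27}.

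I expect no genuine obstacle here --- the proof is bookkeeping around Theorem~\ref{thm1}. The points deserving a moment's care are: (i) verifying $X\le Q^4/27$, which is what forces ``all sufficiently large $Q$'' into the statement and causes no trouble because $X$ is a power of $Q$ strictly below $Q^4$; (ii) handling the factor $|\ln(Q^4/X)|$, which is only $O(\ln Q)$ and is therefore comfortably dominated by the power gain over the main term; and (iii) pinning down the admissible range of $v$: the $Q^3$ in the error coincides in order with the main term when $v=\tfrac35$ (hence the open upper endpoint), while the $X$ term coincides with the main term at $v=0$, which is precisely why $v=0$ cannot be obtained from Theorem~\ref{thm1} and is treated by the elementary count instead.
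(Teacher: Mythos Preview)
Your proof is correct and follows the same approach as the paper's: substitute $X=\gamma_3 Q^{4-2v}$ into Theorem~\ref{thm1} and observe that the error term is dominated by the main term precisely when $v<\tfrac{3}{5}$. The paper's own argument is extremely terse (it just writes down the substitution and the condition $Q^3\ll Q^{2/3}X^{5/6}$), so your version is in fact more complete---in particular, your separate treatment of the endpoint $v=0$ via the trivial count $\#\mathcal{P}_3(Q)\asymp Q^4$ is a detail the paper skips, and your explicit check that the $X\lvert\ln(Q^4/X)\rvert$ term is $o(Q^{4-5v/3})$ for $v>0$ is likewise something the paper leaves implicit.
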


We shall use the result of Theorem \ref{thm1} to prove the following generalization of Davenport's summation formula \cite{Dav61} for $|D(P)|^{-1/2}$.

\begin{theorem}\label{thm2}
For $X$ satisfying $0 \le X \le Q^4 / 27$ and all sufficiently large $Q$, we have
\begin{equation}\label{er27+}
\sum_{\stackrel{\scriptstyle H(P)\le Q}{1\le |D(P)|\le X}}|D(P)|^{-1/2} =
 \frac{7}{6}\kappa\, Q^{2/3} X^{1/3} + O\left(X^{1/2}\left|\ln\left(\frac{Q^4}{X}\right)\right| + X^{1/2} + Q^{1.7}\right),
\end{equation}
where the summation is taken over irreducible polynomials $P$ of degree $3$; the constant $\kappa$ is defined by \eqref{eq_kappa}; an implicit constant in the symbol $O(\cdot)$ is absolute.
\end{theorem}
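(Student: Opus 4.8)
\emph{Proof proposal.} The plan is to deduce Theorem~\ref{thm2} from Theorem~\ref{thm1} by Abel summation, the genuinely new work being the removal of the reducible polynomials from the sum and a strengthening, in the range of small discriminants, of the $O(Q^{3})$ term of Theorem~\ref{thm1} — which is far too crude once it is weighted by $|D(P)|^{-1/2}$ and integrated. Write $S(Q,X)$ for the left--hand side of \eqref{er27+} and put
\[
N^{\mathrm{irr}}(Q,t)=\#\{P\in\mathcal{P}_3(Q):P\text{ irreducible},\ |D(P)|\le t\}.
\]
An irreducible cubic has $D(P)\neq0$, hence $|D(P)|\ge1$; thus $N^{\mathrm{irr}}(Q,t)=0$ for $t<1$ and the condition $1\le|D(P)|$ in \eqref{er27+} is automatic. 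Riemann--Stieltjes integration by parts then gives
\[
S(Q,X)=\int_{1^{-}}^{X}t^{-1/2}\,dN^{\mathrm{irr}}(Q,t)=X^{-1/2}N^{\mathrm{irr}}(Q,X)+\tfrac12\int_{1}^{X}t^{-3/2}N^{\mathrm{irr}}(Q,t)\,dt,
\]
so everything is reduced to controlling $N^{\mathrm{irr}}(Q,t)$ for $1\le t\le X$.

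First I would pass from $N^{\mathrm{irr}}$ to the function $N$ of Theorem~\ref{thm1} by discarding reducible cubics. A reducible cubic factors as $P=L\,G$ with $L=bx-c$ linear and $G=dx^{2}+ex+f$ quadratic, and then $D(P)=D(G)\,\operatorname{Res}(L,G)^{2}$, so $|D(P)|^{-1/2}=|D(G)|^{-1/2}\,|\operatorname{Res}(L,G)|^{-1}$. Summing over the $O(Q^{4})$ factorizations with $H(LG)\le Q$, discarding the degenerate cases $D(G)=0$ or $\operatorname{Res}(L,G)=0$ (which give $D(P)=0$), and exploiting both that $D(G)$ and $\operatorname{Res}(L,G)$ are nonzero integers and that the height constraint strongly couples $b,c$ to $d,e,f$, one should obtain
\[
\sum_{\substack{P\text{ reducible},\ H(P)\le Q\\ 1\le|D(P)|\le X}}|D(P)|^{-1/2}\ \ll\ Q^{1.7}.
\]
Consequently $S(Q,X)$ equals $X^{-1/2}N(Q,X)+\tfrac12\int_{1}^{X}t^{-3/2}N(Q,t)\,dt$ up to an error $O(Q^{1.7})$.

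It remains to insert Theorem~\ref{thm1}. Its asymptotic is informative only once the main term dominates the $Q^{3}$ term, i.e.\ for $t\gg Q^{14/5}$; for $1\le t\le Q^{14/5}$ I would instead use the unconditional upper bound $N(Q,t)\ll Q^{2/3}t^{5/6}$, which is the upper half of Corollary~\ref{cor1} (from \cite{K10}) and persists below the range of validity of the two--sided estimate, giving $\int_{1}^{Q^{14/5}}t^{-3/2}N(Q,t)\,dt\ll Q^{2/3}\int_{1}^{Q^{14/5}}t^{-2/3}\,dt\ll Q^{8/5}$, which is harmless. On the range $Q^{14/5}\le t\le X$, substituting $N(Q,t)=\kappa Q^{2/3}t^{5/6}+O(t|\ln(Q^{4}/t)|+t+Q^{3})$ and integrating term by term, the main terms combine with the boundary term $X^{-1/2}\kappa Q^{2/3}X^{5/6}$ to produce the main term of \eqref{er27+}; the contribution of $O(t|\ln(Q^{4}/t)|+t)$ is $\tfrac12\int t^{-3/2}\cdot O(t|\ln(Q^{4}/t)|+t)\,dt=O(X^{1/2}|\ln(Q^{4}/X)|+X^{1/2})$, and that of the $O(Q^{3})$ term, now integrated only over $t\ge Q^{14/5}$, is $O\!\big(Q^{3}(Q^{14/5})^{-1/2}\big)=O(Q^{8/5})$. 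Collecting everything and absorbing $Q^{8/5}\ll Q^{1.7}$ gives \eqref{er27+}; when $X\le Q^{14/5}$ the claimed main term is itself $\ll Q^{8/5}$ and the statement follows directly from the two bounds just used.

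The main obstacle is precisely the $Q^{3}\to Q^{1.7}$ strengthening, in its two guises: the displayed bound on the reducible contribution, and — implicitly, through the estimate $N(Q,t)\ll Q^{2/3}t^{5/6}$ for small $t$ — control of the near--degenerate irreducible cubics (those clustering near the cusp of the discriminant surface $\{D(P)=0\}$, equivalently near polynomials with a triple root) that are responsible for the $Q^{3}$ term in Theorem~\ref{thm1}. A black--box use of Theorem~\ref{thm1} alone would only yield the error $O(X^{1/2}|\ln(Q^{4}/X)|+X^{1/2}+Q^{3})$; extracting the sharper $Q^{1.7}$ is where the real work lies, and it is also the reason the argument must reach back into the structure behind Theorem~\ref{thm1} rather than merely its statement.
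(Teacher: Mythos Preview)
Your overall framework---Abel summation and a split at $t\asymp Q^{14/5}$---matches the paper, and your analysis on the range $t\ge Q^{14/5}$ via Theorem~\ref{thm1} is correct. The gap is in the small-$t$ range. You invoke the bound $N(Q,t)\ll Q^{2/3}t^{5/6}$ for $1\le t\le Q^{14/5}$, attributing it to Corollary~\ref{cor1} and \cite{K10}; but Corollary~\ref{cor1} is explicitly restricted to $v<3/5$, i.e.\ to $t\gg Q^{14/5}$, and the bound is in fact \emph{false} below that: the polynomials $c(x-1)^2(x+2)$ with $|c|\le Q/3$ already give $N(Q,t)\ge N(Q,0)\gg Q$ for every $t\ge0$, contradicting $N(Q,1)\ll Q^{2/3}$. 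Reverting to $N^{\mathrm{irr}}$ does not rescue the step either, since neither Corollary~\ref{cor1} nor \cite{K10} furnishes $N^{\mathrm{irr}}(Q,t)\ll Q^{2/3}t^{5/6}$ in that range. Your sketch for the reducible sum via the factorisation $D(LG)=D(G)\operatorname{Res}(L,G)^{2}$ is plausible but unproved, and even if granted it would still leave the small-discriminant \emph{irreducible} contribution unhandled.

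The paper obtains the $Q^{1.7}$ term by an entirely different mechanism: Davenport's inequality $\nu(d)\ll Q\,(h(d)+h(-d))\,d^{-1/4}$, which bounds the number of irreducible cubics of discriminant $\pm d$ in terms of class numbers of binary cubic forms, combined with the class-number estimate $\sum_{1\le|d|\le X}h(d)\ll X$ from \cite{Dav51_I,Dav51_II}. Partial summation then gives $s(Q,X)\ll QX^{1/4}$ directly for irreducibles, whence $S_1=s(Q,c_2Q^{14/5})\ll Q\cdot Q^{7/10}=Q^{17/10}$. This arithmetic input is the true source of the exponent $1.7$, and it disposes of both of your ``obstacles'' in one stroke; there is no need to bound the reducible sum separately, and no need for any upper bound on $N(Q,t)$ below $Q^{14/5}$ beyond what Davenport supplies for $N^{*}$.
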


Assuming $X = \gamma_3 Q^{4-\eta}$ in \eqref{er27+} we have Davenport's formula \cite{Dav61} corresponds to the case $\eta=0$. For $0 < \eta < 0.9$ we obtain the following result.
\begin{corollary}\label{cor2-1}
Suppose that $0\le \eta < 9/10$. Then for all sufficiently large $Q$
\begin{equation}\label{er27+2}
 \sum_{\stackrel{\scriptstyle H(P)\le Q}{1\le |D(P)|\le \gamma_3 Q^{4-\eta}}}|D(P)|^{-1/2}\ \asymp Q^{2-\frac\eta3},
\end{equation}
where an implicit constant in the symbol $\asymp$ is absolute.
\end{corollary}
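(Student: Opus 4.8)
The plan is to read the corollary off Theorem~\ref{thm2} by substituting $X=\gamma_3 Q^{4-\eta}$, handling the endpoint $\eta=0$ by a separate reference. First suppose $0<\eta<9/10$. The choice $X:=\gamma_3 Q^{4-\eta}$ is admissible in Theorem~\ref{thm2} once $Q$ is large enough in terms of $\eta$, because $\gamma_3 Q^{4-\eta}\le Q^4/27$ is equivalent to $Q^{\eta}\ge 27\gamma_3$. Inserting this $X$, the main term of \eqref{er27+} becomes
\[
\tfrac{7}{6}\,\kappa\, Q^{2/3} X^{1/3}=\tfrac{7}{6}\,\kappa\,\gamma_3^{1/3}\, Q^{\,2-\eta/3},
\]
an absolute constant times $Q^{2-\eta/3}$, so it remains only to check that the error term in \eqref{er27+} has strictly smaller order.

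With $X=\gamma_3 Q^{4-\eta}$ we have $\ln(Q^4/X)=\eta\ln Q-\ln\gamma_3\ll_\eta\ln Q$ and $X^{1/2}=\gamma_3^{1/2}Q^{\,2-\eta/2}$, whence the error term is $\ll_\eta Q^{\,2-\eta/2}\ln Q+Q^{1.7}$. Compared with the main term $Q^{2-\eta/3}$: the first summand is smaller by the factor $Q^{-\eta/6}\ln Q$, which tends to $0$ for each fixed $\eta>0$; the second is smaller exactly when $1.7<2-\eta/3$, i.e.\ when $\eta<9/10$ --- which is the hypothesis. Hence the error is $o_\eta\!\bigl(Q^{2-\eta/3}\bigr)$, so for all $Q$ above a threshold $Q_0(\eta)$ the sum in \eqref{er27+2} lies between $\tfrac{7}{12}\kappa\gamma_3^{1/3}Q^{2-\eta/3}$ and $\tfrac{7}{3}\kappa\gamma_3^{1/3}Q^{2-\eta/3}$. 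Both bounding constants are absolute, as \eqref{er27+2} requires; only the threshold depends on $\eta$.

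For $\eta=0$ the statement is exactly Davenport's summation formula \cite{Dav61}, giving the sum $\asymp Q^2$; this cannot be extracted from Theorem~\ref{thm2} alone, since then $X=\gamma_3 Q^4>Q^4/27$, and even taking $X=Q^4/27$ the error $O(Q^2)$ already matches the main term. (An unconditional upper bound $\ll Q^2$ for $\eta=0$ does follow by splitting the sum at $|D(P)|=Q^4/27$: the tail has at most $\#\mathcal{P}_3(Q)\ll Q^4$ terms, each $\le\sqrt{27}\,Q^{-2}$, while the rest is $\ll Q^2$ by Theorem~\ref{thm2} with $X=Q^4/27$; but the matching lower bound still needs Davenport's method.) There is no serious obstacle here --- the corollary is a bookkeeping consequence of Theorem~\ref{thm2}, and the only delicate point is that the two endpoint conditions have genuinely distinct roles: $\eta>0$ is what eventually forces $\gamma_3 Q^{4-\eta}\le Q^4/27$ and demotes the logarithmic error below the main term, whereas $\eta<9/10$ is precisely the range in which the unconditional term $Q^{1.7}$ is beaten by $Q^{2-\eta/3}$.
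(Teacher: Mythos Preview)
Your proposal is correct and follows exactly the approach the paper takes: the paper simply says to substitute $X=\gamma_3 Q^{4-\eta}$ into Theorem~\ref{thm2} for $0<\eta<0.9$ and to invoke Davenport's formula \cite{Dav61} for $\eta=0$. You have supplied the omitted verification that the error terms are dominated by the main term precisely when $\eta<9/10$, and correctly observed that the $\asymp$-constants are absolute while only the threshold $Q_0$ depends on $\eta$.
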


\section{Auxiliary statements}
For the proof of Theorem \ref{thm1} we shall need the following lemmas. The expression $\|\mathbf{x}\|_{\infty}$ denotes the maximum norm of a vector $\mathbf{x}=(x_1,\ldots,x_n)\in\mathbb{R}^n$.
\begin{lemma}\label{lm_dmn_mes}
Let $f({\bf x})\in C(\mathbb{R}^{n})$ and let $f(t {\bf x}) = t^d f({\bf x})$
for all $t \in \mathbb{R}$, $t > 0$.
Let
\[
G(\delta, R) = \{{\bf x} \in \mathbb{R}^{n} : |f({\bf x})| \le \delta, \ \|{\bf x}\|_{\infty} \le R \}
\]
and
\[
\sigma(\delta) = \mes_{n-1}\{{\bf x} \in \mathbb{R}^{n} : |f({\bf x})| \le \delta, \ \|{\bf x}\|_{\infty} = 1 \}.
\]
Then
\begin{equation}\label{er3}
\mes_{n} G(\delta,R)=\int_{0}^{R}r^{n-1}\sigma\left(\frac{\delta}{r^{d}}\right)dr.
\end{equation}
\end{lemma}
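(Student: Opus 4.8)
The plan is to compute the volume of the region $G(\delta,R)$ by integrating in ``polar-type'' coordinates adapted to the homogeneity of $f$. Concretely, I would parametrize $\mathbf{x}\in\mathbb{R}^n\setminus\{0\}$ by its radius $r = \|\mathbf{x}\|_\infty$ and its projection $\mathbf{u} = \mathbf{x}/r$ onto the unit sphere $S=\{\mathbf{u}:\|\mathbf{u}\|_\infty = 1\}$ in the $\ell^\infty$ norm, i.e. the surface of the cube $[-1,1]^n$. Under the map $(r,\mathbf{u})\mapsto r\mathbf{u}$, the $n$-dimensional Lebesgue measure factors as $d\mes_n = r^{n-1}\,dr\,d\mes_{n-1}(\mathbf{u})$, where $\mes_{n-1}$ is the $(n-1)$-dimensional Hausdorff/surface measure on $S$; this is the standard ``cone over the sphere'' decomposition, valid because the cube boundary is Lipschitz and the scaling $\mathbf{u}\mapsto r\mathbf{u}$ is linear with Jacobian $r^{n-1}$ in the radial direction.

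Next I would use the homogeneity hypothesis $f(t\mathbf{x}) = t^d f(\mathbf{x})$ for $t>0$ to rewrite the defining inequality. A point $\mathbf{x}=r\mathbf{u}$ with $r>0$ satisfies $|f(\mathbf{x})|\le\delta$ if and only if $r^d|f(\mathbf{u})|\le\delta$, i.e. $|f(\mathbf{u})|\le\delta/r^d$. Hence, for fixed $r\in(0,R]$, the slice $\{\mathbf{u}\in S:\ r\mathbf{u}\in G(\delta,R)\}$ is exactly $\{\mathbf{u}\in S:|f(\mathbf{u})|\le\delta/r^d\}$, whose $(n-1)$-dimensional measure is by definition $\sigma(\delta/r^d)$. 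Note continuity of $f$ is used here to ensure $f$ is well-defined and the sublevel sets are measurable (indeed relatively closed in $S$), so $\sigma$ is a well-defined function of its argument.

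Combining the two observations via Fubini/Tonelli, and noting that the single point $\mathbf{x}=\mathbf{0}$ contributes zero to $\mes_n$,
\[
\mes_n G(\delta,R) = \int_0^R \Big(\int_{\{\mathbf{u}\in S:\ |f(\mathbf{u})|\le \delta/r^d\}} d\mes_{n-1}(\mathbf{u})\Big)\, r^{n-1}\,dr = \int_0^R r^{n-1}\,\sigma\!\left(\frac{\delta}{r^d}\right) dr,
\]
which is the claimed identity \eqref{er3}. The application of Tonelli is legitimate since the integrand is nonnegative and measurable; the inner integral equals $\sigma(\delta/r^d)$ for each $r>0$ by the previous paragraph.

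The main technical point — and the only place where any care is needed — is justifying the change-of-variables formula $d\mes_n = r^{n-1}\,dr\,d\mes_{n-1}(\mathbf{u})$ for the $\ell^\infty$ ``polar'' coordinates. One clean way around any subtlety is to avoid surface measure on the cube altogether: decompose $S$ into its $2n$ facets, on each facet one coordinate is $\pm 1$ and the remaining $n-1$ coordinates range over $[-1,1]^{n-1}$, giving an explicit bi-Lipschitz chart; then the cone over each facet is handled by the ordinary (affine) change of variables in $\mathbb{R}^n$, and summing over facets recovers the formula. Alternatively, one may first prove the identity for smooth $f$ and pass to the limit, but the facet decomposition is the most elementary route and makes the Jacobian computation transparent. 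Everything else is a direct substitution.
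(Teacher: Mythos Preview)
Your proposal is correct and follows essentially the same approach as the paper. The paper carries out explicitly the facet decomposition you describe at the end: it splits $G(\delta,R)$ into the sets $G_i(\delta,R)=\{\mathbf{x}\in G(\delta,R):|x_i|\ge|x_k|\text{ for }k\ne i\}$, changes variables on each via $|x_i|=r$, $x_k=r\theta_k$ with Jacobian $r^{n-1}$, uses homogeneity to identify the inner integral with the corresponding facet contribution $\sigma_i(\delta/r^d)$, and sums over $i$; your write-up is the same argument phrased in $\ell^\infty$-polar coordinates with the facet computation deferred to a remark.
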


\begin{proof}
Let us consider the subsets of $G(\delta, R)$:
\[
G_i(\delta,R) = \left\{\mathbf{x}=(x_1,\ldots,x_n)\in G(\delta,R) : |x_i|\ge |x_k|, i\ne k\right\}.
\]
Obviously, we have
\[
\mes_{n} G(\delta,R) = \sum_{i=1}^n \mes_{n} G_i(\delta,R).
\]

Without loss of generality we consider the case of $G_1(\delta, R)$
\[
\mes_{n} G_1(\delta,R) = \int_{G_1(\delta,R)} dx_1\,dx_2\ldots dx_n.
\]
We change variables by formulas
$|x_1| = r$,
$x_i = r \theta_i$,  $2\le i\le n$.
Jacobian of this transformation is equal to
$\frac{\partial(x_1,x_2,\ldots,x_n)}{\partial(r,\theta_2,\ldots,\theta_n)} = r^{n-1}$.
Thus, we have
\[
\mes_{n} G_1(\delta,R) = \int_0^R \left( \int_{\Gamma_1^+(\delta,r)} d\theta_2\ldots d\theta_n  + 
\int_{\Gamma_1^-(\delta,r)} d\theta_2\ldots d\theta_n \right) r^{n-1} dr,
\]
where
\begin{align*}
\Gamma_1^+(\delta, r) &:= \{ (\theta_2,\ldots,\theta_n)\in\mathbb{R}^{n-1}:|\theta_i|\le 1, \ |f(r,r\theta_2,\ldots,r\theta_n)|\le\delta\},\\
\Gamma_1^-(\delta, r) &:= \{ (\theta_2,\ldots,\theta_n)\in\mathbb{R}^{n-1}:|\theta_i|\le 1, \ |f(-r,r\theta_2,\ldots,r\theta_n)|\le\delta\}.
\end{align*}

Let us denote
$\sigma_i(\delta) = \mes_{n-1}\{{\bf x} \in \mathbb{R}^{n} : |f({\bf x})| \le \delta, \ \|{\bf x}\|_{\infty} = 1, \ |x_i|=1 \}$. It is clear that $\sigma(\delta)=\sum_{i=1}^n \sigma_i(\delta)$. Since the function $f$ is homogeneous, we obtain
\[
\int_{\Gamma_1^+(\delta,r)} d\theta_2\ldots d\theta_n + \int_{\Gamma_1^-(\delta,r)} d\theta_2\ldots d\theta_n
= \sigma_1\!\left(\frac{\delta}{r^d}\right).
\]

The lemma is proved.
\end{proof}

Let us denote ${\bf p} = (p_n, \ldots, p_1, p_0) \in \mathbb{R}^{n+1}$.
Let us define a mapping $R : \mathbb{R}^{n+1} \to \mathbb{R}^{n+1}$, $R{\bf p} = (p_0, \ldots, p_{n-1}, p_n)$.
There is an equivalent definition of the mapping $R$:
\[
P(x) = \sum_{i=0}^{n} p_i x^i \longmapsto R(P)(x) = \sum_{i=0}^{n} p_{n-i} x^i = x^n P\left(\frac{1}{x}\right).
\]
\begin{lemma}\label{lm_dscr_prp}
Let $D({\bf p})$ denote a discriminant of a polynomial $P(x) = \sum_{i=0}^{n} p_i x^i$ as function of the
coefficients of the polynomial.
Then
\begin{equation}\label{eq-prty}
 D(-{\bf p}) = D({\bf p}),
\end{equation}
\begin{equation}\label{eqRprty}
 D(R{\bf p}) = D({\bf p}).
\end{equation}
\end{lemma}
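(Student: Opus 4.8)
The plan is to establish both identities directly from the product formula \eqref{er1} for the discriminant, tracking how the roots of $P$ transform under negation of the coefficient vector and under the reversal map $R$. Since $D(\mathbf p)$ is a polynomial in the coefficients, it suffices to verify each identity for polynomials $P$ with $p_n \ne 0$ and with distinct roots; the general case then follows because a polynomial identity that holds on a Zariski-dense (indeed, open and dense) subset holds identically. Throughout I write $P(x) = p_n\prod_{i=1}^{n}(x-\alpha_i)$, so that $D(\mathbf p) = p_n^{2n-2}\prod_{i<j}(\alpha_i-\alpha_j)^2$.

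For \eqref{eq-prty}, observe that $-P(x) = \sum_i (-p_i)x^i$ has leading coefficient $-p_n$ and exactly the same roots $\alpha_1,\dots,\alpha_n$ as $P$ (multiplying a polynomial by a nonzero scalar does not change its roots). Substituting into \eqref{er1},
\[
D(-\mathbf p) = (-p_n)^{2n-2}\prod_{i<j}(\alpha_i-\alpha_j)^2 = p_n^{2n-2}\prod_{i<j}(\alpha_i-\alpha_j)^2 = D(\mathbf p),
\]
since $2n-2$ is even. This disposes of the first identity with essentially no computation.

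For \eqref{eqRprty}, I use the description $R(P)(x) = x^n P(1/x)$. When $p_0 \ne 0$, none of the $\alpha_i$ vanish, and one checks that
\[
x^n P(1/x) = x^n p_n \prod_{i=1}^n\left(\tfrac1x - \alpha_i\right) = p_n\prod_{i=1}^{n}(1 - \alpha_i x) = p_n\Bigl(\prod_{i=1}^n(-\alpha_i)\Bigr)\prod_{i=1}^{n}\bigl(x - \alpha_i^{-1}\bigr),
\]
so the reversed polynomial has roots $\alpha_1^{-1},\dots,\alpha_n^{-1}$ and leading coefficient $q_n := p_n\prod_i(-\alpha_i) = (-1)^n p_0$ (using $p_0 = p_n\prod_i(-\alpha_i)$). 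Plugging into \eqref{er1} and using the elementary identity $\alpha_i^{-1} - \alpha_j^{-1} = -(\alpha_i-\alpha_j)/(\alpha_i\alpha_j)$,
\[
D(R\mathbf p) = q_n^{2n-2}\prod_{i<j}\frac{(\alpha_i-\alpha_j)^2}{\alpha_i^2\alpha_j^2}
= q_n^{2n-2}\Bigl(\prod_{i=1}^n \alpha_i\Bigr)^{-2(n-1)}\prod_{i<j}(\alpha_i-\alpha_j)^2.
\]
Since $\prod_i\alpha_i = (-1)^n p_0/p_n$, we get $q_n^{2n-2}\bigl(\prod_i\alpha_i\bigr)^{-2(n-1)} = p_0^{2n-2}\cdot(p_0/p_n)^{-2(n-1)} = p_n^{2n-2}$, and therefore $D(R\mathbf p) = p_n^{2n-2}\prod_{i<j}(\alpha_i-\alpha_j)^2 = D(\mathbf p)$.

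The only real subtlety — and the step I would be most careful about — is the passage from the generic case ($p_n p_0 \ne 0$, distinct roots) to all of $\mathbb R^{n+1}$: I would invoke that $D$ is a polynomial (as recalled in the excerpt, citing \cite{Waer}), that $R$ is linear, and that the set $\{p_np_0 \ne 0,\ D(\mathbf p)\ne 0\}$ is Zariski-dense, so the polynomial identities $D(-\mathbf p) - D(\mathbf p) = 0$ and $D(R\mathbf p) - D(\mathbf p) = 0$, holding on a dense set, hold everywhere. The bookkeeping with signs and with the exponent $2n-2$ being even is routine but must be done correctly.
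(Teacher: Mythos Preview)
Your argument is correct and follows essentially the same route as the paper: both identities are verified from the product formula \eqref{er1} by tracking how the roots and leading coefficient transform, and the case $p_0=0$ is handled by continuity/polynomiality of $D$. One cosmetic slip: from $p_0 = p_n\prod_i(-\alpha_i)$ you get $q_n = p_0$, not $(-1)^n p_0$; this is harmless since $q_n$ only appears to the even power $2n-2$.
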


\begin{proof}
The functional equation \eqref{eq-prty} directly follows from \eqref{er1}.

We will prove the equation \eqref{eqRprty}.
It is easy to see that the polynomial $Q(x) = R(P)(x)$ has roots $\beta_i = 1/\alpha_i$,
$1 \le i \le n$, and leading coefficient $q_n = p_0 = p_n \prod_{i=1}^n \alpha_i$. We put $q_n$ and $\beta_i$, $1 \le i \le n$,
into \eqref{er1} and obtain the equation \eqref{eqRprty}.
Note, in the proof we assume that $\alpha_i \ne 0$, $1 \le i \le n$, i.e. $p_0 \ne 0$. But discriminant is a continuous
function (polynomial) of the coefficients ${\bf p}$. Hence we have that the equation \eqref{eqRprty} is true in the case $p_0 = 0$.
\end{proof}

Given a polynomial
\begin{equation}\label{er5}
P(t) = z t^3 + y t^2 + x t + u
\end{equation}
of degree 3, or binary cubic form
\begin{equation}
P(\tau, \chi) = z \tau^3 + y \tau^2 \chi + x \tau \chi^2 + u \chi^3,
\end{equation}
its discriminant is well known to be
\begin{equation}\label{er6}
D(P)=x^{2}y^{2}-4uy^{3}-27u^{2}z^{2}-4x^{3}z+18uxyz.
\end{equation}

Let us consider the discriminant surface given by $D(P) = \delta $ in the space $\mathbb{R}^4$; that is
\begin{equation}\label{er7}
\mathcal{S}(\delta) = \left\{ (u,x,y,z)\in\mathbb{R}^4 : x^{2}y^{2}-4uy^{3}-27u^{2}z^{2}-4x^{3}z+18uxyz=\delta \right\}.
\end{equation}

Since the polynomial \eqref{er6} is quadratic  with respect to $u$, we may solve \eqref{er7} with respect to $u$.
\begin{lemma} The surface $\mathcal{S}(\delta)$ given by \eqref{er7} in the $\mathbb{R}^4$ has the explicit form
\begin{equation}\label{er8}
 \begin{matrix}
  u_{1}(x,y,z)=u_{1}(x,y,z,\delta)=\frac{9xyz-2y^3-\sqrt{S-27z^2\delta}}{27z^2},\\
  u_{2}(x,y,z)=u_{2}(x,y,z,\delta)=\frac{9xyz-2y^3+\sqrt{S-27z^2\delta}}{27z^2},
 \end{matrix}
\end{equation}
where $u_1$, $u_2$ are the two branches of the function $u$, and
\begin{equation}\label{er9}
 S=S(x,y,z):=4(y^{2}-3xz)^3.
\end{equation}
The domain of definition of $u_{1}(x,y,z)$ and $u_{2}(x,y,z)$ is given by
\begin{equation}\label{er10}
S-27z^{2}\delta \ge 0.
\end{equation}
\end{lemma}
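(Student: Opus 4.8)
The plan is to regard the defining equation $D(P)=\delta$ of $\mathcal{S}(\delta)$, read off from \eqref{er6} and \eqref{er7}, as a quadratic equation in the single unknown $u$, treating $x,y,z,\delta$ as parameters. Collecting powers of $u$ gives
\[
-27z^{2}u^{2}+(18xyz-4y^{3})u+(x^{2}y^{2}-4x^{3}z-\delta)=0 .
\]
Under the assumption $z\ne0$ --- which is implicit in \eqref{er8}, whose denominators equal $27z^{2}$, the degenerate locus $z=0$ not being covered by this parametrisation --- this is a genuine quadratic, and the quadratic formula gives
\[
u=\frac{-(18xyz-4y^{3})\pm\sqrt{(18xyz-4y^{3})^{2}+108z^{2}(x^{2}y^{2}-4x^{3}z-\delta)}}{-54z^{2}} .
\]

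The one nontrivial point is to simplify the expression under the radical. The claim is the polynomial identity
\[
(18xyz-4y^{3})^{2}+108z^{2}(x^{2}y^{2}-4x^{3}z)=16\,(y^{2}-3xz)^{3},
\]
both sides being homogeneous of degree $6$ in $x,y,z$; this is verified by a direct term-by-term expansion. Granting it, the radicand equals $16(y^{2}-3xz)^{3}-108z^{2}\delta=4\bigl(S-27z^{2}\delta\bigr)$ with $S$ as in \eqref{er9}, so the square root equals $2\sqrt{S-27z^{2}\delta}$. Substituting this back, cancelling the common factor $2$, and absorbing the sign of $-54z^{2}$ (which merely swaps the two roots), we obtain exactly
\[
u_{1,2}(x,y,z)=\frac{9xyz-2y^{3}\mp\sqrt{S-27z^{2}\delta}}{27z^{2}},
\]
which is \eqref{er8}.

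Finally, the domain of definition is immediate: $u_{1}(x,y,z)$ and $u_{2}(x,y,z)$ are real precisely when the quantity under the square root is non-negative, i.e. when $S-27z^{2}\delta\ge0$, which is \eqref{er10}.

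No genuine obstacle arises beyond the bookkeeping in the identity $(18xyz-4y^{3})^{2}+108z^{2}(x^{2}y^{2}-4x^{3}z)=16(y^{2}-3xz)^{3}$. Alternatively, if one is willing to take the form of \eqref{er8} as given, the lemma can be proved in the reverse direction by substituting $u=u_{1}$ and $u=u_{2}$ into \eqref{er6} and checking directly that $D(P)=\delta$ in both cases --- an equally short computation.
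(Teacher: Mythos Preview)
Your proof is correct and follows exactly the approach the paper indicates: the sentence preceding the lemma already says ``Since the polynomial \eqref{er6} is quadratic with respect to $u$, we may solve \eqref{er7} with respect to $u$,'' and the paper gives no further details. Your write-up simply carries out this quadratic-formula computation explicitly, including the verification of the identity $(18xyz-4y^{3})^{2}+108z^{2}(x^{2}y^{2}-4x^{3}z)=16(y^{2}-3xz)^{3}$, which is the only non-obvious step.
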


The following two Lemmas \ref{lm_uz_less_1} and \ref{lm_uy_less_1} will be used to obtain the lower bound.
\begin{lemma}\label{lm_uz_less_1}
Let
\begin{equation}
M_{z}(\delta)=\left\{(x,y)\in\mathbb{R}^2 : |x| \le 1-\sqrt{\frac{|\delta|}{3}}, \ |y| \le 1, \ S(x,y,1)
\ge 27\delta \right\}.
\end{equation}
Then $|u_{1,2}(x,y,1,\delta)|\le 1$ for all $(x, y) \in M_{z}(\delta)$ for $|\delta| \le 1/27$.
\end{lemma}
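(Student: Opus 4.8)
The plan is to substitute $z=1$ into the explicit formulas \eqref{er8} and estimate the numerator crudely by the triangle inequality. For $(x,y)\in M_{z}(\delta)$ the quantity $S(x,y,1)-27\delta=4(y^{2}-3x)^{3}-27\delta$ is nonnegative, so the radical is real and
\[
27\,|u_{1,2}(x,y,1,\delta)|\le|9xy-2y^{3}|+\sqrt{S(x,y,1)-27\delta}.
\]
It therefore suffices to prove that the right-hand side never exceeds $27$ on $M_{z}(\delta)$ when $|\delta|\le 1/27$.

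First I would bound each of the two pieces using only $|y|\le 1$ and $|x|\le 1-\sqrt{|\delta|/3}$. For the polynomial part, $|9xy-2y^{3}|\le 9|x|+2$. For the radical, $(y^{2}-3x)^{3}\le|y^{2}-3x|^{3}\le(y^{2}+3|x|)^{3}\le(1+3|x|)^{3}$, so $S(x,y,1)-27\delta\le 4(1+3|x|)^{3}+27|\delta|$. Both bounds are increasing in $|x|$, so I would replace $|x|$ by its maximal value $1-\sqrt{|\delta|/3}$. Writing $a:=\sqrt{|\delta|/3}$, so that $27|\delta|=81a^{2}$ and $|\delta|\le 1/27$ translates into $a\in[0,1/9]$, this yields
\[
27\,|u_{1,2}(x,y,1,\delta)|\le (11-9a)+\sqrt{4(4-3a)^{3}+81a^{2}}=:\phi(a),
\]
and the problem is reduced to the one-variable inequality $\phi(a)\le 27$ for $a\in[0,1/9]$.

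Finally I would settle this by monotonicity: $\phi(0)=11+\sqrt{256}=27$, and
\[
\phi'(a)=-9+\frac{162a-36(4-3a)^{2}}{2\sqrt{4(4-3a)^{3}+81a^{2}}};
\]
since $162a\le 18<36(11/3)^{2}\le 36(4-3a)^{2}$ for $a\in[0,1/9]$, the fraction is negative, so $\phi'(a)<0$ and $\phi$ is decreasing on $[0,1/9]$, giving $\phi(a)\le\phi(0)=27$. There is no genuine obstacle here, but the one subtle point worth flagging is that the estimate is sharp: at $\delta=0,\ y=1,\ x=-1$ one has $u_{1}=-1$, so the triangle-inequality step must be used in exactly this form, the hypothesis $|x|\le 1-\sqrt{|\delta|/3}$ cannot be weakened, and the only real care needed is not to waste any constant in the elementary bounds above.
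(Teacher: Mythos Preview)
Your proof is correct. The initial step---applying the triangle inequality to the explicit formula \eqref{er8} with $z=1$---is exactly what the paper does, but you finish differently. The paper splits the radical via $\sqrt{S-27\delta}\le\sqrt{S}+\sqrt{27|\delta|}$ and then uses the crude bound $\sqrt{S}\le\sqrt{4(y^{2}+3|x|)^{3}}\le 16$; this decouples the $\delta$-dependence from the $(x,y)$-dependence and yields the linear estimate $|u_{1,2}|\le(9|x|+18+\sqrt{27|\delta|})/27$, from which the condition $|x|\le 1-\sqrt{|\delta|/3}$ drops out by inspection. You instead keep $S-27\delta$ intact, bound it by $4(1+3|x|)^{3}+27|\delta|$, and reduce to the one-variable inequality $\phi(a)\le 27$ settled by computing $\phi'$. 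Your route is slightly longer but has the advantage of never writing $\sqrt{S}$ on its own, which sidesteps the (harmless but notationally awkward) case $S<0$ that can occur when $\delta<0$.
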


\begin{proof}
It is easy to see that
\[
 |u_{1,2}(x,y,1,\delta)| \le 
 \frac{9|xy|+2|y|^{3}+\sqrt{S}+\sqrt{27|\delta|}}{27}.
\]
The conditions $|x|\le 1$, $|y|\le 1$ imply the inequality $0 \le S \le 4(y^{2}+3|x|)^{3} \le 16^{2}$.
Hence, we have $|u_{1,2}(x,y,1,\delta)| \le \frac{9|x|+18+\sqrt{27|\delta|}}{27}$, and therefore, the condition
$9|x|+\sqrt{27|\delta|} \le 9$ implies the desired bound $|u_{1,2}(x,y,1,\delta)|\le 1$.
This condition is equivalent to $|x| \le 1-\sqrt{\frac{|\delta|}{3}}$. The lemma is proved.
\end{proof}

\begin{lemma}\label{lm_uy_less_1}
Let
\begin{equation}
M_{y}(\delta)=\left\{(x,z)\in\mathbb{R}^2 : |x| \le 1, \ \frac{1}{3} \le |z| \le 1, \ \frac{2}{9} \le xz, \
S(x,1,z)\ge 27z^{2}\delta \right\}.
\end{equation}
Then $|u_{1,2}(x,1,z,\delta)| \le 1$ for all $(x, z) \in M_{y}(\delta)$ for $|\delta|\le 1/27$.
\end{lemma}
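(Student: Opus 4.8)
The plan is to substitute $y = 1$ into the explicit formula \eqref{er8} and reduce the assertion to an elementary inequality in the two quantities $xz$ and $z^2$. For $(x,z) \in M_y(\delta)$ set $T := \sqrt{S(x,1,z) - 27 z^2 \delta}$, a nonnegative real number by the very definition of $M_y(\delta)$. By \eqref{er8} with $y = 1$ one has $27 z^2 u_{1,2}(x,1,z,\delta) = 9xz - 2 \mp T$, so, since $z \ne 0$, the desired bound $|u_{1,2}(x,1,z,\delta)| \le 1$ is equivalent to
\[
|9xz - 2 \mp T| \le 27 z^2 .
\]
First I would record two cheap facts valid on $M_y(\delta)$. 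Since $xz \ge 2/9$, the quantity $9xz - 2$ is nonnegative. And since $|z| \le 1$ and $|\delta| \le 1/27$, the membership condition together with \eqref{er9} gives $S(x,1,z) = 4(1 - 3xz)^3 \ge 27 z^2 \delta \ge -z^2 \ge -1$, whence $1 - 3xz \ge -4^{-1/3}$ and therefore $0 \le 9xz - 2 \le 1 + 3 \cdot 4^{-1/3} < 3 \le 27 z^2$ (the last step uses $|z| \ge 1/3$). These two facts already dispose of the "$-$"-branch upper bound ($9xz - 2 - T \le 9xz - 2 < 27 z^2$) and of the "$+$"-branch lower bound ($9xz - 2 + T \ge 0 > -27 z^2$).

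It then remains to prove the single inequality $T \le 27 z^2 - 9xz + 2$, whose right-hand side is positive by the above. Since $9xz - 2 \ge 0$, this in turn forces $T \le 27 z^2 + 9xz - 2$, i.e. $9xz - 2 - T \ge -27 z^2$, which is the one remaining case. Squaring $T \le 27 z^2 - 9xz + 2$ and using $T^2 = S(x,1,z) - 27 z^2 \delta \le S(x,1,z) + z^2$ (again $|27 z^2 \delta| \le z^2$), it is enough to verify
\[
S(x,1,z) + z^2 \le (27 z^2 - 9xz + 2)^2 .
\]
Putting $b := 9xz - 2 \in [0,\, 1 + 3 \cdot 4^{-1/3}]$ and $s := z^2 \in [1/9, 1]$, and noting $S(x,1,z) = \tfrac{4}{27}(1 - b)^3$, this becomes $g(s) := 729 s^2 - (54 b + 1) s + b^2 - \tfrac{4}{27}(1 - b)^3 \ge 0$. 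As a quadratic in $s$, $g$ opens upward with vertex at $s = (54 b + 1)/1458 < 1/9$ (because $b < 161/54$), so $g$ is increasing on $[1/9, 1]$ and it suffices to check $g(1/9) \ge 0$; a short computation gives $27\, g(1/9) = 4 b^3 + 15 b^2 - 150 b + 236$, and the factorization $\tfrac{d}{db}\bigl(4b^3 + 15b^2 - 150b + 236\bigr) = 6(2b - 5)(b + 5)$ places the minimum of this cubic over $b \ge 0$ at $b = 5/2$, where its value is $69/4 > 0$.

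The main obstacle is not the algebra but keeping the numerical margins honest at exactly two spots: one must extract the sharp bound $xz \le (1 + 4^{-1/3})/3$ from the domain condition \eqref{er10} so that $9xz - 2$ stays strictly below $3 \le 27 z^2$, and one must verify $1 + 3 \cdot 4^{-1/3} < 161/54$ — equivalently $4 \cdot 107^3 > 162^3$ — so that the vertex of $g$ lies strictly to the left of $s = 1/9$ and the single value $g(1/9)$ controls the whole interval. Everything else (the reduction through \eqref{er8}, the squaring step, and the one-variable minimization of the cubic in $b$) is routine.
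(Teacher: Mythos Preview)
Your proof is correct. The reduction to the single inequality $T\le 27z^2-9xz+2$, the squaring step (with the clean bound $T^2\le S+z^2$), and the analysis of $g(s)$ as a quadratic in $s$ followed by the cubic in $b$ all check out; the numerical margins $1+3\cdot4^{-1/3}<3$ and $1+3\cdot4^{-1/3}<161/54$ are verified exactly as you say.

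The paper takes a different, shorter route: it bounds $\sqrt{S-27z^2\delta}\le\sqrt{S}+\sqrt{27z^2|\delta|}$ by a triangle-type inequality, uses $xz\ge2/9$ to get $S\le4/27$, uses $S\ge27z^2\delta$ to bound $9xz-2$, and then checks the resulting expression is $\le1$ numerically at $|\delta|=1/27$. That argument is a couple of lines, but it tacitly assumes $S\ge0$ (needed for $\sqrt{S}$ to make sense), which is not guaranteed on $M_y(\delta)$ when $\delta<0$ and $xz>1/3$; the fix is easy (when $S<0$ one has $\sqrt{S-27z^2\delta}\le\sqrt{27z^2|\delta|}$ directly), but it is not stated. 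Your squaring approach avoids this case split entirely and is therefore cleaner logically, at the cost of a bit more algebra.
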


\begin{proof}
It is easy to observe that
\[
 |u_{1,2}(x,1,z,\delta)| \le 
 \frac{|9xz-2|+\sqrt{S}+\sqrt{27z^2|\delta|}}{27z^2},
\]
where $S = 4(1-3xz)^3$.

From $S - 27z^2 \delta \ge 0$, we get the inequality $3xz \le 1-3 \cdot \sqrt[3]{\frac{z^2}{4} \delta}$.
The inequality $xz \ge 2/9$ yields $S \le 4/27$.
\begin{multline*}
|u_{1,2}(x,1,z,\delta)| \le \frac{1}{27z^2} \left(1 + \frac{2}{3\sqrt{3}} -
9 \cdot \sqrt[3]{\frac{z^2 \delta}{4}} + \sqrt{27z^2|\delta|} \right)\le\\
\le \frac{1}{3} \left(1 + \frac{2}{3\sqrt{3}}\right) +
\sqrt[3]{\frac{3 |\delta|}{4}} + \sqrt{\frac{|\delta|}{3}}.
\end{multline*}
For $|\delta| \le 1/27$ for all $(x,z) \in M_{y}(\delta)$ it holds $|u_{1,2}(x,1,z,\delta)| \le 1$.
\end{proof}

For any given ${\bf p} = (u, x, y, z) \in \mathbb{R}^4$ we write
$D({\bf p}) = x^{2}y^{2}-4uy^{3}-27u^{2}z^{2}-4x^{3}z+18uxyz$ (cf. \eqref{er6}).

\begin{lemma}\label{sgm_up_bound}
Let $\sigma(\delta) = \mes_3\{{\bf p}\in\mathbb{R}^4 : |D({\bf p})| \le \delta, \ \|{\bf p}\|_{\infty} = 1\}$.
Then for $0<\delta\le 1/27$ we have
\begin{equation}\label{er13}
 \sigma(\delta) = c_{1}\delta^{5/6} + O(\delta),
\end{equation}
where $c_1$ is an absolute constant, which doesn't depend on $\delta$;
an implicit constant in the symbol $O(\cdot)$ is also absolute.
\end{lemma}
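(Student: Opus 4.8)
The plan is to slice $\partial([-1,1]^{4})$ into its eight coordinate faces, collapse them to two inequivalent ones using the symmetries of $D$, and on each face compute the length of the one–dimensional section of $\{\,|D|\le\delta\,\}$ in the variable in which $D$ is a quadratic with nonvanishing leading coefficient, reading the level sets off \eqref{er8}--\eqref{er10}. \emph{Reduction.} The eight faces $\{|p_{i}|=1\}$ cover $\partial([-1,1]^{4})$ and pairwise meet in sets of zero $3$-measure; by \eqref{eq-prty} opposite faces carry equal measure, and by \eqref{eqRprty} the reversal $R$ (interchanging $u\leftrightarrow z$ and $x\leftrightarrow y$) identifies the $z$-faces with the $u$-faces and the $y$-faces with the $x$-faces. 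Hence $\sigma(\delta)=4A(\delta)+4B(\delta)$, where I take the "easy" representative $A(\delta)=\mes_{3}\{(u,x,y)\in[-1,1]^{3}:|D(u,x,y,1)|\le\delta\}$ (face $z=1$) and the "hard" one $B(\delta)=\mes_{3}\{(u,x,z)\in[-1,1]^{3}:|D(u,x,1,z)|\le\delta\}$ (face $y=1$).

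\emph{The face $z=1$.} For fixed $(x,y)$ the map $u\mapsto D(u,x,y,1)$ is a downward parabola with maximal value $S/27$, $S:=S(x,y,1)=4(y^{2}-3x)^{3}$, and level curves given by \eqref{er8}--\eqref{er10} at $z=1$. Dropping momentarily the constraint $u\in[-1,1]$, the section $\{u:|D|\le\delta\}$ is empty when $S<-27\delta$, is one interval of length $\tfrac{2}{27}\sqrt{S+27\delta}$ when $|S|\le27\delta$, and is two intervals of total length $\tfrac{4\delta}{\sqrt{S+27\delta}+\sqrt{S-27\delta}}$ when $S>27\delta$. I integrate over $(x,y)\in[-1,1]^{2}$ after the substitution $w=y^{2}-3x$ (so $S=4w^{3}$, $|dx|=\tfrac13\,dw$, and the integrand loses its $y$-dependence, giving a factor $2=\int_{-1}^{1}dy$), then $w=(27\delta/4)^{1/3}t$; this turns the two pieces into absolute multiples of $\delta^{5/6}\!\int_{-1}^{1}\!\sqrt{t^{3}+1}\,dt$ and $\delta^{5/6}\!\int_{1}^{\infty}\!\tfrac{dt}{\sqrt{t^{3}+1}+\sqrt{t^{3}-1}}$, the multiples being in ratio $1:2$, with the upper limit $\asymp\delta^{-1/3}$ pushed to $\infty$ at cost $O(\delta)$ (the integrand is $O(t^{-3/2})$). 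Since $|u_{*}|=|9xy-2y^{3}|/27\le\tfrac{11}{27}$, the one-interval section never leaves $[-1,1]$ (this is essentially Lemma~\ref{lm_uz_less_1}), and the two-interval one leaves it only when $S>256-27\delta$, i.e. on an $O(\delta^{2})$-neighbourhood of the corners $(x,y)=(-1,\pm1)$ on which the section has length $O(\delta)$. Hence
\[
A(\delta)=c_{A}\Bigl(\textstyle\int_{-1}^{1}\!\sqrt{t^{3}+1}\,dt+2\int_{1}^{\infty}\!\tfrac{dt}{\sqrt{t^{3}+1}+\sqrt{t^{3}-1}}\Bigr)\delta^{5/6}+O(\delta),\qquad c_{A}>0 \text{ absolute.}
\]

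\emph{The face $y=1$.} Now $u\mapsto D(u,x,1,z)$ is a downward parabola with leading coefficient $-27z^{2}$ and maximum $S/27z^{2}$, $S:=S(x,1,z)=4(1-3xz)^{3}$, so the untruncated section has the same three forms with $27\delta$ replaced by $27z^{2}\delta$ and an overall factor $1/27z^{2}$. Two new features must be handled: the leading coefficient degenerates as $z\to0$, and the truncation to $[-1,1]$ is active. I would split $[-1,1]^{2}$ (in $(x,z)$) into a fixed neighbourhood $R=\{|1-3xz|\le\eta\}$ of the curve $\{3xz=1\}$, with $\eta$ a small absolute constant, and its complement. On $R^{c}$ one has $S\ge4\eta^{3}$ bounded below, so the section has length $\ll\delta$ there and contributes $O(\delta)$; this disposes of the degeneracy and in particular of the neighbourhood of $z=0$, where the section is just the single interval $\{|u-x^{2}/4|\le\delta/4\}\subset[-1,1]$ of length $\asymp\delta$. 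On $R$ one has $|z|\ge\tfrac{1-\eta}{3}$ bounded away from $0$, the double root $u_{*}=(9xz-2)/27z^{2}$ lies in $[0,\tfrac13]$ and the section has half-width $O(\sqrt\delta)$ about it, so it stays inside $[-1,1]$ (this is the content of Lemma~\ref{lm_uy_less_1}); there the substitutions $w=1-3xz$ (for fixed $z$, $|dx|=\tfrac1{3z}dw$) and $w=(27z^{2}\delta/4)^{1/3}t$ reproduce the \emph{same} two model integrals, now multiplied by the convergent factor $\int_{1/3\le|z|\le1}z^{-4/3}\,dz$ (the part $|z|\in(\tfrac{1-\eta}{3},\tfrac13)$ of $R$ being separately $O(\delta)$, so the constant is $\eta$-independent as it must be). Together with the $O(\sqrt\delta)$- or $O(\delta^{1/3})$-wide boundary layers near $|z|=\tfrac13$, $xz=\tfrac29$, $3xz=1$ and $z=\pm1$ — each contributing $O(\delta)$ — this gives $B(\delta)=c_{B}\bigl(\int_{-1}^{1}\!\sqrt{t^{3}+1}\,dt+2\int_{1}^{\infty}\!\tfrac{dt}{\sqrt{t^{3}+1}+\sqrt{t^{3}-1}}\bigr)\delta^{5/6}+O(\delta)$ with $c_{B}>0$ absolute, and therefore $\sigma(\delta)=4A(\delta)+4B(\delta)=c_{1}\delta^{5/6}+O(\delta)$ with $c_{1}=4(c_{A}+c_{B})\bigl(\int_{-1}^{1}\!\sqrt{t^{3}+1}\,dt+2\int_{1}^{\infty}\!\tfrac{dt}{\sqrt{t^{3}+1}+\sqrt{t^{3}-1}}\bigr)$ absolute.

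\emph{Main obstacle.} The delicate part is the face $y=1$: one must determine, as $(x,z)$ ranges over $R$ and crosses the transition $S=27z^{2}\delta$, exactly which arcs of the section meet $[-1,1]$ (to pin down $c_{B}$, hence $c_{1}$), and off $R$ and in the various boundary layers one must verify that the degenerate and truncated remainders really are $O(\delta)$ and not of some intermediate order such as $\delta^{3/4}$ — the cut-offs $\eta$, $|z|\ge\tfrac13$, $xz\ge\tfrac29$ of $M_{y}(\delta)$ being chosen precisely so that both tasks go through, after which everything reduces, as on the face $z=1$, to the two integrals $\int_{-1}^{1}\sqrt{t^{3}+1}\,dt$ and $\int_{1}^{\infty}\tfrac{dt}{\sqrt{t^{3}+1}+\sqrt{t^{3}-1}}$.
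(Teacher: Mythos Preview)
Your proposal is correct and follows essentially the same route as the paper: reduction to the two faces $z=1$ and $y=1$ via the symmetries of Lemma~\ref{lm_dscr_prp}, the explicit section in $u$ read off from \eqref{er8}--\eqref{er10}, the substitution $S/(27z^{2}\delta)=t^{3}$ producing the two model integrals $\int_{-1}^{1}\sqrt{t^{3}+1}\,dt$ and $\int_{1}^{\infty}(\sqrt{t^{3}+1}+\sqrt{t^{3}-1})^{-1}dt$, and the truncation to $|u|\le 1$ controlled exactly as in Lemmas~\ref{lm_uz_less_1} and~\ref{lm_uy_less_1}. The only difference is organisational on the face $y=1$: you localise to a fixed tubular neighbourhood $R$ of the curve $\{3xz=1\}$ and dispose of $R^{c}$ and of the strip $|z|<\tfrac13$ as $O(\delta)$, whereas the paper integrates over all of $G_{y}$ and isolates the main term by splitting the $z$-range at $z_{\delta}\approx\tfrac13$ (Lemma~\ref{lm_sgmay}); both arrangements yield the same constant $c_{y}$ and the same $O(\delta)$ error.
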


\begin{proof}
Let $\delta > 0$, and let ${\bf p}=(u,x,y,z)$.

The properties of discriminant (see Lemma \ref{lm_dscr_prp}) imply that we need to consider
only two faces $z=1$ and $y=1$ of the box $\|{\bf p}\|_{\infty} = 1$.
Let us define
\[
\begin{gathered}
\sigma_z(\delta) = \mes_3\{{\bf p}\in\mathbb{R}^4 : |D({\bf p})| \le \delta, \ \|{\bf p}\|_{\infty} = 1,\ z=1\},\\
\sigma_y(\delta) = \mes_3\{{\bf p}\in\mathbb{R}^4 : |D({\bf p})| \le \delta, \ \|{\bf p}\|_{\infty} = 1,\ y=1\}.
\end{gathered}
\]
Then
\[
  \sigma(\delta)= 4(\sigma_z(\delta)+ \sigma_y(\delta)).
\]

Obviously, we have
\[
\sigma_z(\delta) = \iiint_{\mathcal{D}_z} du\, dx\, dy, \qquad
\sigma_y(\delta) = \iiint_{\mathcal{D}_y} du\, dx\, dz,
\]
where
\[
\begin{gathered}
\mathcal{D}_z := \{(u,x,y)\in\mathbb{R}^3 : |D(u,x,y,1)| \le \delta, \ \max\{|u|,|x|,|y|\}\le 1\},\\
\mathcal{D}_y := \{(u,x,z)\in\mathbb{R}^3 : |D(u,x,1,z)| \le \delta, \ \max\{|u|,|x|,|z|\}\le 1\}.
\end{gathered}
\]

For $\delta > 0$ we consider the auxiliary function
\begin{equation*}
h(\delta)=h(x,y,z,\delta)=
\left\{ \begin{array}{ll}
   (u_{1}(\delta)-u_{1}(-\delta))+(u_{2}(-\delta)-u_{2}(\delta)), &
    S > 27 z^{2}\delta,\\
   u_{2}(-\delta)-u_{1}(-\delta), &
    |S| \le  27 z^{2}\delta,
  \end{array} \right.
\end{equation*}
where $u_j(\delta) = u_j(x,y,z,\delta)$, $j=1,2$.

In the notation introduced above, using Lemmas \ref{lm_uz_less_1} and \ref{lm_uy_less_1}, we have
\[
\begin{aligned}
\iint_{M_{z}}h(x,y,1,\delta)\,dx\, dy \le \sigma_z(\delta) \le \iint_{G_z} h(x,y,1,\delta)\,dx\,dy, \\
\iint_{M_{y}}h(x,1,z,\delta)\,dx\, dz \le \sigma_y(\delta) \le \iint_{G_y} h(x,1,z,\delta)\,dx\,dz,
\end{aligned}
\]
where
\[
\begin{aligned}
G_z=G_z(\delta):=&\{(x,y)\in\mathbb{R}^2 : \max\{|x|,|y|\}\le 1, \ S(x,y,1) \ge - 27 \delta\},\\
G_y=G_y(\delta):=&\{(x,z)\in\mathbb{R}^2 : \max\{|x|,|z|\}\le1, \ S(x,1,z) \ge -27 z^2 \delta\}.
\end{aligned}
\]
and
\begin{align*}
M_z := & M_z(-\delta) = \left\{(x,y)\in G_z(\delta) : |x| \le 1-\sqrt{\frac{\delta}{3}}\right \},\\
M_y := & M_y(-\delta) = \left\{(x,z)\in G_y(\delta) : \frac{1}{3} \le |z|, \ \frac{2}{9} \le xz \right\}.
\end{align*}

Now, we are ready to prove asymptotic formulas for $\sigma_z(\delta)$ and $\sigma_y(\delta)$.
Note that the function $h(x,y,z,\delta)$ has the form
\begin{equation}\label{eq_hxyzd}
 h(x,y,z,\delta)= \left\{ \begin{array}{ll}
                \dfrac{4\delta}{\sqrt{S+27 z^{2}\delta}+\sqrt{S-27 z^2\delta}}, & \text{if }S> -27 z^2\delta,\\[4ex]
        \dfrac{2\sqrt{S+27 z^{2}\delta}}{27 z^{2}}, & \text{if } |S| \le 27 z^2 \delta,
               \end{array} \right.
\end{equation}
where the function $S=S(x,y,z)$ is defined by \eqref{er9}.

In calculations of integrals we will use the substitution $\frac{S}{27 z^{2}\delta}=t^{3}$,
which leads to the following equalities:
\begin{equation}\label{er14}
\frac{1}{3}\sqrt[3]{\frac{4}{z^{2}\delta}}\, (y^{2}-3xz) = t; \hspace*{3ex}
x = \frac{y^{2}}{3z}-\sqrt[3]{\frac{\delta}{4z}}\,t; \hspace*{3ex}
dx = -\sqrt[3]{\frac{\delta}{4z}}\,dt.
\end{equation}

\begin{lemma}\label{lm_sgmaz}
For $0<\delta\le 1/27$, we have
\begin{equation}\label{eq_sgmaz}
\sigma_z(\delta) = c_z \cdot \delta^{5/6} + O(\delta),
\end{equation}
where
\begin{equation}\label{eq_cz}
c_z = \frac{4^{2/3}}{\sqrt{27}} \left(
2 \int_{1}^{\infty} \frac{dt}{\sqrt{t^3+1}+\sqrt{t^3-1}} +
\int_{-1}^{1} \sqrt{t^3+1}\,dt
\right).
\end{equation}
\end{lemma}

\begin{proof}
Firstly, we calculate the integral
$\iint_{G_z} h(x,y,1,\delta)\,dx\,dy$.
Accordingly to \eqref{eq_hxyzd}, we divide the domain $G_{z}$ into two subdomains $G_{z} = G_z^{(1)} \sqcup G_z^{(2)}$,
where
\begin{align*}
G_z^{(1)} &= \{(x,y) \in G_{z} : S(x,y,1) > 27\delta\}, \\
G_z^{(2)} &= \{(x,y) \in G_{z} : |S(x,y,1)| \le 27\delta\}.
\end{align*}

For the domain $G_z^{(1)}$ we have $I_z^{(1)} := \iint_{G_z^{(1)}}h(x,y,1,\delta)dxdy$.
We apply the substitution \eqref{er14}.
After this transformation we obtain
\begin{equation*}
 I_z^{(1)} = \delta^{\frac{5}{6}}\cdot \frac{2\cdot 4^{\frac{2}{3}}}{\sqrt{27}}\int_0^1 dy \int_{1}^{\tau(y)}\frac{dt}{\sqrt{t^{3}+1}+\sqrt{t^{3}-1}},
\end{equation*}
where $\tau(y) := \frac13 \sqrt[3]{\frac4\delta}(y^2+3)$.

Since $\tau(y)\ge \tau_0 := \sqrt[3]{\frac4\delta}$ and
$
\int_{\tau(y)}^{\infty}\frac{dt}{\sqrt{t^{3}+1}+\sqrt{t^{3}-1}} \le
\int_{\tau_0}^{\infty} t^{-3/2} dt = 2^{2/3} \delta^{1/6},
$
we have the following asymptotics for $I_z^{(1)}$:
\begin{equation}\label{er15}
 I_z^{(1)} = \delta^{\frac{5}{6}}\cdot \frac{2\cdot 4^{\frac{2}{3}}}{\sqrt{27}}\int_{1}^{\infty}\frac{dt}{\sqrt{t^{3}+1}+\sqrt{t^{3}-1}} + O(\delta).
\end{equation}

Applying \eqref{er14} yields
\begin{equation}\label{er16}
 I_z^{(2)} := \iint_{G_z^{(2)}}h(x,y,1,\delta)dxdy=\delta^{\frac{5}{6}} \cdot \frac{4^{\frac{2}{3}}}{\sqrt{27}}\int_{-1}^{1}\sqrt{t^{3}+1}\,dt
\end{equation}
for the domain $G_z^{(2)}$.

Therefore, we have
\[
\iint_{G_z} h(x,y,1,\delta)\,dx\,dy = c_z \cdot \delta^{5/6} + O(\delta),
\]
where $c_z$ is defined by \eqref{eq_cz}.

Since
\[
\mes_2 (G_{z}\setminus M_z) = O(\delta^{1/2}), \qquad \sup_{(x,y)\in G_{z}\setminus M_z} h(x,y,1,\delta) = O(\delta^{1/2}),
\]
we obtain
\[
\iint_{G_{z}\setminus M_z}h(x,y,1,\delta)\,dx\, dy = O(\delta).
\]
Thus, Lemma \ref{lm_sgmaz} is proved.
\end{proof}

\begin{lemma}\label{lm_sgmay}
For $0<\delta\le 1/27$, we have
\begin{equation}\label{eq_sgmay}
\sigma_y(\delta) = c_y \cdot \delta^{5/6} + O(\delta),
\end{equation}
where
\begin{equation}\label{eq_cy}
c_y = 3(\sqrt[3]{3}-1)\cdot \frac{4^{2/3}}{\sqrt{27}}
\left(
2\int_{1}^{\infty} \frac{dt}{\sqrt{t^3+1}+\sqrt{t^3-1}} +
\int_{-1}^{1} \sqrt{t^3+1}\,dt
\right).
\end{equation}
\end{lemma}

\begin{proof}
Let us calculate the integral
$\iint_{G_y} h(x,1,z,\delta)\,dx\,dz$.
According to \eqref{eq_hxyzd}, for the domain $G_{y}$
we have $G_{y} = G_y^{(1)} \sqcup G_y^{(2)}$,
where
\begin{align*}
G_y^{(1)} &= \{(x,z) \in G_{y} : S(x,1,z) > 27 z^2 \delta\}, \\
G_y^{(2)} &= \{(x,z) \in G_{y} : |S(x,1,z)| \le 27 z^2 \delta\}.
\end{align*}

We consider the domain $G_y^{(1)}$. Let us apply the substitutions \eqref{er14} to the integral
\[
I_y^{(1)} := \iint_{G_y^{(1)}}h(x,1,z,\delta)dxdz =
8 \delta \int_{0}^{1} dz \int_{x_{1}(z)}^{x_{2}(z)}\frac{dx}{\sqrt{S+27 z^{2}\delta}+\sqrt{S-27 z^2\delta}},
\]
where
\[
x_1(z) = \begin{cases}
\frac{1}{3z} - \sqrt[3]{\frac{\delta}{4z}}, & -1\le z\le -z_\delta,\\
-1, & -z_\delta < z\le 1,
\end{cases}
\]
\[
x_2(z)=
\begin{cases}
1, & -1\le z < z_\delta,\\
\frac{1}{3z} - \sqrt[3]{\frac{\delta}{4z}}, & z_\delta\le z\le 1,
\end{cases}
\]
and $z_{\delta}\in [0,1]$ is the real solution of the equation $\frac{1}{3z}-\sqrt[3]{\frac{\delta}{4z}}=1$, which is equivalent to
\begin{equation}\label{er22}
 \delta=\frac{4}{z^{2}} \left( \frac{1}{3}-z \right)^{3}.
\end{equation}
Note that $z_\delta \in\left[\frac17,\frac13\right]$ for $\delta\in[0,1]$, and
$\lim_{\delta\to 0} z_{\delta} = \frac13$.

After the substitutions \eqref{er14} the limits of integration are given by
\[
 t_{1}(z)=
\begin{cases}
1, & -1\le z \le -z_{\delta},\\
\frac{1}{3}\sqrt[3]{\frac{4}{z^{2}\delta}}(1+3z), & -z_{\delta} < z \le 1;
\end{cases}
\]
\[
 t_{2}(z)=
\begin{cases}
\frac{1}{3}\sqrt[3]{\frac{4}{z^{2}\delta}}(1-3z), & -1\le z < z_{\delta},\\
1, & z_{\delta} \le z \le 1.
\end{cases}
\]

Hence the integral $I_y^{(1)}$ may be rewritten as
\[
 I_y^{(1)} = \delta^{\frac{5}{6}} \cdot \frac{2\cdot 4^{\frac{2}{3}}}{\sqrt{27}}\int_{0}^{1}\frac{dz}{z \sqrt[3]{z}} \int_{t_{2}(z)}^{t_{1}(z)}\frac{dt}{\sqrt{t^{3}+1}+\sqrt{t^{3}-1}}.
\]
The right hand side of the integral can be written as a sum of two integrals, which can be estimated as follows:
\begin{multline*}
 J_{1} := \int_{0}^{z_\delta}\frac{dz}{z \sqrt[3]{z}} \int_{t_{2}(z)}^{t_{1}(z)}\frac{dt}{\sqrt{t^{3}+1}+\sqrt{t^{3}-1}}
\le \int_{0}^{z_\delta}\frac{dz}{z \sqrt[3]{z}} \int_{t_{2}(z)}^{t_{1}(z)}\frac{dt}{\sqrt{t^{3}}}
\le\\
\le \delta^{\frac{1}{6}} \cdot 6\, \sqrt[3]{4}\, \sqrt{3}\int_{0}^{\frac{1}{3}}\frac{dz}{\sqrt{1-9z^{2}} \cdot (\sqrt{1+3z}+\sqrt{1-3z})},
\end{multline*}
\begin{multline*}
 J_{2} := \int_{z_\delta}^{1}\frac{dz}{z \sqrt[3]{z}} \int_{t_{2}(z)}^{t_{1}(z)}\frac{dt}{\sqrt{t^{3}+1}+\sqrt{t^{3}-1}} = \\
= 3(z_\delta^{-1/3}-1) \int_{1}^{\infty}\frac{dt}{\sqrt{t^{3}+1}+\sqrt{t^{3}-1}}+O(\delta^{1/6}).
\end{multline*}

It is easy to obtain from \eqref{er22} that $\left|z_\delta^{-1/3}-\sqrt[3]{3}\right| = O(\delta^{1/3})$.

Thus,
\begin{multline}\label{er17}
I_y^{(1)} = \delta^{\frac{5}{6}} \cdot \frac{2\cdot 4^{\frac{2}{3}}}{\sqrt{27}} (J_1 + J_2) = \\
= \delta^{\frac{5}{6}} \cdot \frac{2\cdot 4^{\frac{2}{3}}}{\sqrt{27}}\,
3(\sqrt[3]{3}-1) \int_{1}^{\infty}\frac{dt}{\sqrt{t^{3}+1}+\sqrt{t^{3}-1}}+O(\delta).
\end{multline}

For the domain $G_y^{(2)}$ we have
\[
I_y^{(2)} := \iint_{G_y^{(2)}} h(x,1,z,\delta)dx dz =
4 \int_{z_\delta}^1 dz \int_{x_1(z)}^{x_2(z)} \frac{\sqrt{S + 27 z^2 \delta}}{27 z^2} dx,
\]
where $x_1(z) = \frac{1}{3z} - \sqrt[3]{\frac{\delta}{4z}}$ and $x_2(z) = \min\left\{ \frac{1}{3z} + \sqrt[3]{\frac{\delta}{4z}}, \ 1 \right\}$.

Let us apply the substitution \eqref{er14} to the integral $I_y^{(2)}$.
Since $\left|z_{\pm\delta}-\frac13\right|=O(\delta^{1/3})$,
$|x_2(z)-x_1(z)| = O(\delta^{1/3})$ for $z\ge z_\delta$,
and $\sup_{(x,z)\in G_y^{(2)}} h(x,1,z,\delta) = O(\delta^{1/2})$,
we get the following asymptotics for $I_y^{(2)}$
\begin{equation}\label{er18}
 I_y^{(2)} = \delta^{\frac{5}{6}} \cdot
\frac{4^{\frac{2}{3}}}{\sqrt{27}} \cdot 3 (\sqrt[3]{3}-1) \int_{-1}^{1} \sqrt{t^3 + 1}\,dt + O(\delta^{7/6}).
\end{equation}

From \eqref{er17} and \eqref{er18}, we get
\[
\iint_{G_y} h(x,1,z,\delta)\,dx\,dz = c_y \cdot \delta^{5/6} + O(\delta),
\]
where $c_y$ is defined by \eqref{eq_cy}.

Since
\[
\mes_2 (G_y\setminus M_y) = O(1), \qquad \sup_{(x,z)\in G_y\setminus M_y} h(x,1,z,\delta) = O(\delta),
\]
we obtain
\[
\iint_{G_y\setminus M_y}h(x,1,z,\delta)\,dx\, dz = O(\delta).
\]
Hence, the proof of Lemma \ref{lm_sgmay} is completed.
\end{proof}

Using Lemmas \ref{lm_sgmaz} and \ref{lm_sgmay}, we obtain the equality
\begin{equation}\label{er19}
 \sigma(\delta) = c_1 \cdot \delta^{\frac{5}{6}} + O(\delta),
\end{equation}
where $c_1:=4(c_z + c_y)$, and $c_z$, $c_y$  are defined by \eqref{eq_cz}, \eqref{eq_cy}.

This is the desired equation, and Lemma \ref{sgm_up_bound} is proved.
\end{proof}

\section{Proof of Theorem~\ref{thm1}}

\begin{lemma}\label{prop1}
Let
\[
V_3(\delta) = \{ P \in \mathbb{R}[x] : \deg P = 3, \ H(P) \le 1, \ |D(P)| \le \delta \}.
\]
Then for $0<\delta\le 1/27$
\begin{equation}\label{er26}
\mes_{4} V_{3}(\delta) = \kappa \cdot \delta^{5/6} + O(\delta\, |\ln\delta| + \delta),
\end{equation}
where $\kappa = \frac32 c_1$, and $c_1$ is the same as in \eqref{er13};
an implicit constant in the symbol $O(\cdot)$ is absolute.
\end{lemma}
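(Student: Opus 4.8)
The plan is to recognise $V_3(\delta)$ as an instance of the set $G(\delta,1)$ from Lemma~\ref{lm_dmn_mes}, applied in ambient dimension $4$ (the space of coefficient vectors $\mathbf p=(u,x,y,z)$ of a cubic $P$) with $f=D$. Indeed $D$ is a polynomial in $\mathbf p$, hence continuous, and by \eqref{er6} it is homogeneous of degree $d=2\cdot 3-2=4$ in $\mathbf p$; moreover the condition $\deg P=3$, i.e. $z\ne 0$, excludes only a null set and is thus irrelevant for the $4$-dimensional measure. Lemma~\ref{lm_dmn_mes} (with $n=4$, $R=1$, $d=4$) then gives
\[
\mes_4 V_3(\delta)=\int_0^1 r^{3}\,\sigma\!\left(\frac{\delta}{r^{4}}\right)dr,
\]
where $\sigma$ is exactly the function from Lemma~\ref{sgm_up_bound}.

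Next I would split this integral at $r_0:=(27\delta)^{1/4}$, noting that $r_0\le 1$ since $\delta\le 1/27$; this is the value separating the range where $\delta/r^4\le 1/27$ (so that the asymptotic $\sigma(t)=c_1t^{5/6}+O(t)$ of Lemma~\ref{sgm_up_bound} is available) from the range where it is not. On $[0,r_0]$ I use only the trivial bound $\sigma(\cdot)=O(1)$ — valid because the set defining $\sigma$ lies in the boundary of $[-1,1]^4$, whose $3$-dimensional measure is $64$ — which yields $\int_0^{r_0}r^3\sigma(\delta/r^4)\,dr=O(r_0^4)=O(\delta)$. On $[r_0,1]$ we have $\delta/r^4\in[\delta,1/27]$, so Lemma~\ref{sgm_up_bound} applies pointwise and
\[
\int_{r_0}^1 r^3\,\sigma\!\left(\frac{\delta}{r^4}\right)dr
= c_1\,\delta^{5/6}\int_{r_0}^1 r^{-1/3}\,dr
+ O\!\left(\delta\int_{r_0}^1 r^{-1}\,dr\right).
\]

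It then remains to evaluate the two elementary integrals. One has $\int_{r_0}^1 r^{-1/3}\,dr=\tfrac32\bigl(1-r_0^{2/3}\bigr)=\tfrac32\bigl(1-(27\delta)^{1/6}\bigr)$, so the main term equals $\tfrac32 c_1\delta^{5/6}-\tfrac32 c_1 27^{1/6}\delta=\kappa\,\delta^{5/6}+O(\delta)$ with $\kappa=\tfrac32 c_1$, matching the statement; and $\int_{r_0}^1 r^{-1}\,dr=-\ln r_0=\tfrac14\bigl|\ln(27\delta)\bigr|=O(|\ln\delta|)$ for $0<\delta\le 1/27$, so the error term contributes $O(\delta|\ln\delta|)$. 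Adding the two pieces of the split integral gives $\mes_4 V_3(\delta)=\kappa\,\delta^{5/6}+O(\delta|\ln\delta|+\delta)$. The only point needing care is the contribution of small $r$, where $\sigma$ no longer behaves like $t^{5/6}$; but there the crude bound $\sigma=O(1)$ together with the weight $r^3$ makes that part of the integral $O(\delta)$, so there is no real obstacle.
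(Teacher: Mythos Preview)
Your proof is correct and follows essentially the same route as the paper: apply Lemma~\ref{lm_dmn_mes} with $n=4$, $d=4$, $R=1$ to get $\mes_4 V_3(\delta)=\int_0^1 r^3\sigma(\delta/r^4)\,dr$, split at $r_0=(27\delta)^{1/4}$, use Lemma~\ref{sgm_up_bound} on $[r_0,1]$ and the trivial bound $\sigma=O(1)$ on $[0,r_0]$. Your write-up is in fact slightly more careful than the paper's (you note that $z\ne0$ excludes only a null set, and you spell out the elementary integrals).
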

\begin{proof}
Applying Lemma \ref{lm_dmn_mes}, we have
\[
\mes_4 V_3(\delta) = \int_0^1 r^3\, \sigma\left(\frac{\delta}{r^4}\right)\, dr.
\]

Using Lemma \ref{sgm_up_bound}, we have that
the asymtotic formula
$\sigma\left(r^{-4}\delta \right) = c_1 r^{-10/3} \delta^{5/6} + O(r^{-4}\delta)$
holds on the interval $[r_0,1]$, where $r_0 := (27\delta)^{1/4}$.
Thus, we obtain
\[
\int_{r_0}^1 r^3\, \sigma\left(\frac{\delta}{r^4}\right)\, dr =
\frac32 c_1 \delta^{5/6} + O(\delta\,|\ln\delta| + \delta).
\]

On the interval $[0,r_0)$ we shall use the trivial bound $\sigma\left(r^{-4}\delta \right)\ll 1$.
Hence, we have $\int_0^{r_0} r^3\, \sigma\left(\frac{\delta}{r^4}\right)\, dr = O(\delta)$.
The lemma is proved.
\end{proof}

\begin{theorem}[\cite{Dav51_LP}]\label{thm_intp_num}
Let $\mathcal{D}\subset \mathbb{R}^d$ be a bounded region consisting of all points $(x_1,\dots,x_d)$ that satisfy all of a finite set of algebraic inequalities
\[
F_i(x_1,\dots,x_d)\ge 0, \qquad 1\le i\le k,
\]
where $F_i$ is a polynomial with real coefficients of degree $\deg F_i \le m$.
Let
\[
\Lambda(\mathcal{D}) = \mathcal{D}\cap \mathbb{Z}^d.
\]
Then
\[
\left|\#\Lambda(\mathcal{D}) - \mes_d \mathcal{D}\right| \le C \max(\bar{V}, 1),
\]
where $C$ depends only on $d$, $k$, $m$, and $\bar{V}$ is the greatest $r$--dimensional measure of any projection of $\mathcal{D}$ on a coordinate space, $1\le r\le d-1$.
\end{theorem}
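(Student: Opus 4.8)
The plan is to prove the estimate by induction on the dimension $d$, reducing a $d$-dimensional lattice count to $(d-1)$-dimensional ones by slicing with the hyperplanes $x_d = n$, $n \in \mathbb{Z}$. It is cleanest to prove the slightly more flexible assertion that for every bounded semialgebraic set $\mathcal{D} \subset \mathbb{R}^d$ of bounded complexity (definable by a quantifier-free formula involving at most $k$ polynomials of degree at most $m$) one has $|\#\Lambda(\mathcal{D}) - \mes_d \mathcal{D}| \le \sum_{j=0}^{d-1} c_j\, V_j(\mathcal{D})$, where $V_j(\mathcal{D})$ denotes the largest $j$-dimensional Lebesgue measure of a projection of $\mathcal{D}$ onto a $j$-dimensional coordinate subspace, $V_0 := 1$, and $c_j = c_j(d,k,m)$; the theorem follows at once, since $V_0 \le \max(\bar V,1)$ and $V_j \le \bar V \le \max(\bar V,1)$ for $1 \le j \le d-1$, so that $C := \sum_j c_j$ works. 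The base case $d = 1$ is immediate: a bounded semialgebraic subset of $\mathbb{R}$ is a union of at most $c(k,m)$ intervals, and an interval of length $\ell$ contains between $\ell - 1$ and $\ell + 1$ integers.

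For the inductive step, write $\mathcal{D}^{(t)} = \{\mathbf{x}' \in \mathbb{R}^{d-1} : (\mathbf{x}',t) \in \mathcal{D}\}$ and $g(t) = \mes_{d-1}\mathcal{D}^{(t)}$. Since $\#\Lambda(\mathcal{D}) = \sum_n \#\Lambda(\mathcal{D}^{(n)})$ and $\mes_d\mathcal{D} = \int_{\mathbb{R}} g(t)\,dt$, we get
\[
\#\Lambda(\mathcal{D}) - \mes_d \mathcal{D} = \sum_n \bigl(\#\Lambda(\mathcal{D}^{(n)}) - \mes_{d-1}\mathcal{D}^{(n)}\bigr) + \Bigl(\sum_n g(n) - \int_{\mathbb{R}} g(t)\,dt\Bigr).
\]
The last term is controlled by the \emph{key lemma} stated below, which asserts that $g$ is piecewise monotone with a number of monotone pieces bounded in terms of $d,k,m$; since also $0 \le g(t) \le \mes_{d-1}(\text{projection of }\mathcal{D}\text{ onto the first }d-1\text{ coordinates}) \le V_{d-1}(\mathcal{D})$, the total variation of $g$ satisfies $\operatorname{Var}(g) \ll_{d,k,m} V_{d-1}(\mathcal{D})$. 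As $\bigl|\sum_n g(n) - \int g\bigr| \le \sum_n \operatorname{osc}_{[n,n+1]}(g) \le \operatorname{Var}(g)$, this term is $O(V_{d-1}(\mathcal{D}))$.

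For the first (sum over slices) term, the inductive hypothesis applied to each $\mathcal{D}^{(n)}$ — still of complexity at most $(k,m)$ after substituting $x_d = n$ — bounds the $n$-th summand by $\sum_{j=0}^{d-2} c_j'\, V_j(\mathcal{D}^{(n)})$, and $V_j(\mathcal{D}^{(n)}) \le V_j(\mathcal{D})$ because a $j$-dimensional coordinate projection of a slice is one of $\mathcal{D}$. A naive summation over the (up to $V_1(\mathcal{D}) + O(1)$) nonempty slices would lose a factor, so I invoke the key lemma a second time: for each coordinate projection $\pi : \mathbb{R}^{d-1} \to \mathbb{R}^j$, writing $\tilde\pi : \mathbb{R}^d \to \mathbb{R}^{j+1}$ for the projection obtained by adjoining the last coordinate, one has $\pi(\mathcal{D}^{(n)}) = (\tilde\pi\mathcal{D})^{(n)}$, whence, with $\tilde g(t) := \mes_j((\tilde\pi\mathcal{D})^{(t)})$,
\[
\sum_n \mes_j\bigl(\pi(\mathcal{D}^{(n)})\bigr) = \sum_n \tilde g(n) \le \int_{\mathbb{R}} \tilde g + \operatorname{Var}(\tilde g) \le \mes_{j+1}(\tilde\pi\mathcal{D}) + C\, V_j(\mathcal{D}) \le V_{j+1}(\mathcal{D}) + C\, V_j(\mathcal{D}).
\]
Summing over the boundedly many $\pi$ and over $j$, and using that $\tilde\pi\mathcal{D}$ is semialgebraic of complexity bounded in terms of $d,k,m$ (effective Tarski--Seidenberg / quantifier elimination), shows the first term is $O\bigl(\sum_{j=1}^{d-1} V_j(\mathcal{D})\bigr)$, which together with the previous paragraph closes the induction.

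It remains to establish the key lemma: \emph{for a bounded semialgebraic set $E \subset \mathbb{R}^{e+1}$ of bounded complexity, the fibre-volume function $t \mapsto \mes_e(E^{(t)})$ is piecewise monotone with a number of monotone pieces bounded in terms of $e$ and the complexity only.} This is the main obstacle. One route is to quote o-minimality: this function is definable in an o-minimal expansion of the reals (e.g. $\mathbb{R}_{\mathrm{an},\exp}$), since volumes of semialgebraic families lie there — alternatively one may cite the parametrised-integration theorems of Comte--Lion--Rolin or Cluckers--Miller — hence is piecewise monotone with finitely many pieces, and a compactness argument over the finite-dimensional, bounded-degree space of admissible defining polynomials yields the required uniformity. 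A more elementary route, in the spirit of Davenport's original argument, fixes a cylindrical algebraic decomposition of $\mathbb{R}^{e+1}$ adapted to the defining polynomials of $E$: then $\mes_e(E^{(t)})$ is a sum of boundedly many cell volumes, each an iterated indefinite integral over $x_1,\dots,x_e$ of algebraic functions between algebraic limits of integration; one shows by induction on the number of integrations that the resulting class of functions of $t$ is closed and consists of functions piecewise monotone with boundedly many pieces — differentiating under the integral sign produces boundary terms of the same controlled shape, whose sign changes occur only at zeros of polynomials of bounded degree. Granting the lemma, the two estimates above give $|\#\Lambda(\mathcal{D}) - \mes_d\mathcal{D}| \le \sum_{j=0}^{d-1} c_j(d,k,m)\, V_j(\mathcal{D}) \le C(d,k,m)\max(\bar V, 1)$, as required.
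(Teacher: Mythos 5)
The paper does not prove this statement at all; it is quoted as Davenport's theorem and referred to the cited works \cite{Dav51_LP} (the 1951 paper and its 1964 corrigendum). So the comparison must be against Davenport's own argument, and there your route is genuinely different. Davenport does not slice off a whole hyperplane $x_d = n$ and then compare the Riemann sum $\sum_n g(n)$ to $\int g$ for the $(d-1)$-dimensional slice-volume function $g(t) = \mes_{d-1}(\mathcal{D}^{(t)})$. Instead he replaces one coordinate at a time: defining $M_j(\mathcal{D}) := \sum_{\mathbf{a}\in\mathbb{Z}^j}\mes_{d-j}\{\mathbf{x}'' : (\mathbf{a},\mathbf{x}'')\in\mathcal{D}\}$, so that $M_0=\mes_d\mathcal{D}$ and $M_d=\#\Lambda(\mathcal{D})$, he bounds each increment $|M_{j+1}-M_j|$ via Fubini, which reduces it to a one-dimensional lattice-count-versus-length estimate on axis-parallel lines. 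The only structural input is then that any line parallel to a coordinate axis meets $\mathcal{D}$ (and each coordinate projection of $\mathcal{D}$, via Tarski--Seidenberg with effective bounds) in at most $h(d,k,m)$ intervals; this is elementary root-counting, and the desired bound falls out as $|M_{j+1}-M_j|\le h\, M_j(\pi_{j+1}\mathcal{D})$, iterated through projections. Davenport's decomposition entirely avoids the need to control the oscillation of any intermediate-dimensional slice-volume function.

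This matters because the ``key lemma'' on which your proof rests is a much harder fact than anything Davenport uses, and your sketch of it is where the argument is not yet a proof. The slice-volume function $g(t)$ of a semialgebraic family is in general \emph{not} semialgebraic (already $\int_1^t dx/x=\ln t$ is a slice-volume of a semialgebraic set), so its sign changes are not ``zeros of polynomials of bounded degree'' as your CAD paragraph asserts. The derivative of an iterated algebraic integral is itself a sum of products of algebraic functions and further such iterated integrals, and bounding its number of zeros uniformly is a nontrivial finiteness problem of Khovanskii/Pfaffian type, not a direct consequence of CAD. The o-minimal / parametrized-integration route (Lion--Rolin, Comte--Lion--Rolin, Cluckers--Miller) does give what you want -- the volume function of a semialgebraic family lies in a polynomially bounded o-minimal structure, hence is piecewise monotone with a uniformly bounded number of pieces -- but that is heavy, post-1990s machinery for a 1951 theorem, and even there you should be careful that ``a compactness argument over the finite-dimensional space of defining polynomials'' requires the monotone pieces to vary definably with the parameters (which is what the uniform cell-decomposition theorem provides), not mere compactness. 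So: your reduction from the theorem to the key lemma is correct, and the key lemma is true, but as written the key lemma is asserted rather than proved, and the whole scheme is considerably less elementary than the Fubini-plus-projections argument Davenport actually employs and the paper implicitly invokes.
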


Let $Q\cdot\mathcal{S}$ denote a set obtained by uniform scaling of a set $\mathcal{S}$ in $Q$ times.

It is easy to see that
\[
N(Q,X) = \#\Lambda\left(Q\cdot V_3(X/Q^4)\right).
\]
Assuming $\delta = X/Q^4$ and $\mathcal{D} = Q\cdot V_3(\delta)$ in Theorem \ref{thm_intp_num}, we obtain
\[
N(Q,X) = Q^4\cdot \mes_4 V_3(X/Q^4) + O(Q^3).
\]
Now, Theorem \ref{thm1} follows from Lemma \ref{prop1}.

By assuming $X = \gamma_3 Q^{4-2v}$ we obtain Corollary \ref{cor1}.
Here, the bounds for $v$ are a direct consequence of the condition
$Q^3 \ll Q^{2/3} X^{5/6}$,
which proves Corollary \ref{cor1}.

\section{Proof of Theorem~\ref{thm2}}

Let us define the following sets of polynomials
\begin{gather}
\mathcal{P}_{n}^*(Q)=\{P\in \mathcal{P}_{n}(Q) : P \text{ is irreducible over } \mathbb{Q} \}, \label{eq_P*Q}\\
\mathcal{P}_{n}^{**}(Q)=\{P\in \mathcal{P}_{n}(Q) : P \text{ is reducible over } \mathbb{Q} \}.
\end{gather}
Obviously, we have
$\mathcal{P}_{n}^{**}(Q) = \mathcal{P}_{n}(Q) \backslash \mathcal{P}_{n}^*(Q)$.

\begin{lemma}[{\cite[Lemma 1]{K12}}]\label{lm_P**}
The number of reducible polynomials $\#\mathcal{P}_{n}^{**}(Q)$ has the following order:
\begin{equation*}
\#\mathcal{P}_{n}^{**}(Q) \asymp_n
\begin{cases}
Q^n, & n\ge 3,\\
Q^2 \ln Q, & n = 2.
\end{cases}
\end{equation*}
\end{lemma}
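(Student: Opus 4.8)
The plan is to exploit that a polynomial of degree $n$ which is reducible over $\mathbb{Q}$ factors over $\mathbb{Z}$ as $P=Q_1Q_2$ with $1\le\deg Q_1,\deg Q_2$ (clear denominators via Gauss's lemma), and to compare $H(P)$ with the product $H(Q_1)H(Q_2)$. By multiplicativity of the Mahler measure, $M(P)=M(Q_1)M(Q_2)$, together with the classical inequalities $\binom{d}{\lfloor d/2\rfloor}^{-1}\|f\|_\infty\le M(f)\le\|f\|_2\le\sqrt{d+1}\,\|f\|_\infty$ for $\deg f=d$ (here $\|f\|_\infty=H(f)$), one obtains
\[
H(Q_1)H(Q_2)\ \le\ C_n\,H(P),\qquad H(P)\ \le\ (n+1)\,H(Q_1)H(Q_2),
\]
where $C_n$ depends only on $n$; for a linear factor $M(ax+b)=\max(|a|,|b|)=H(ax+b)$, so for $n=2$ both estimates hold with explicit constants $\sqrt3$ and $2$. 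With this two-sided comparison the problem reduces to a routine lattice-point count for pairs of integral polynomials of prescribed degrees and bounded product of heights. Throughout, $n\ge3$ is the only case needed for Theorem~\ref{thm2}.

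\emph{Upper bound.} Every reducible $P$ of degree $n$ with $H(P)\le Q$ is counted in $\sum_{k=1}^{n-1}\sum_{s\ge1}N_k(s)\,N'_{n-k}(C_nQ/s)$, where $N_k(s)$ is the number of integral polynomials of degree $k$ and height exactly $s$ and $N'_j(T)$ the number of those of degree $j$ and height at most $T$; since $N_k(s)\ll_k s^k$ and $N'_j(T)\ll_n T^{\,j+1}$, this gives
\[
\#\mathcal P_n^{**}(Q)\ \ll_n\ \sum_{k=1}^{n-1}Q^{\,n-k+1}\sum_{1\le s\ll_n Q}s^{\,2k-n-1}.
\]
For each $k$ the inner sum is bounded by a constant (when $2k-n-1\le-2$), contributing $\ll Q^{\,n-k+1}\le Q^{n}$; or by $\ll Q^{\,2k-n}$ (when $2k-n-1\ge0$), contributing $\ll Q^{\,k+1}\le Q^{n}$; or, in the single borderline case $2k=n$ (forcing $n$ even), it is $\ll\ln Q$, contributing $\ll Q^{\,n/2+1}\ln Q$, which is $\ll Q^{n}$ for $n\ge4$ and equals $Q^{2}\ln Q$ for $n=2$. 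Hence $\#\mathcal P_n^{**}(Q)\ll_n Q^n$ for $n\ge3$ and $\#\mathcal P_2^{**}(Q)\ll Q^2\ln Q$.

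\emph{Lower bound.} For $n\ge3$ it suffices to take the polynomials $P(x)=a_nx^n+\dots+a_1x$ with $a_n\ne0$: each factors as $x\,(a_nx^{n-1}+\dots+a_1)$ into two nonconstant integral polynomials (here $n-1\ge2$), hence is reducible of degree $n$, and $\gg Q^{n}$ of them satisfy $H(P)\le Q$; thus $\#\mathcal P_n^{**}(Q)\gg Q^n$, matching the upper bound. For $n=2$ one counts instead the products $P=Q_1Q_2$ of two primitive linear polynomials with $H(Q_1)H(Q_2)\le Q/2$ (which forces $H(P)\le Q$): by Gauss's lemma the primitive part of a quadratic determines its two primitive linear factors up to order and sign, so distinct unordered pairs of factors yield, up to bounded multiplicity, distinct such $P$; since there are $\asymp s$ primitive linear polynomials of height $s$, the number of these $P$ is $\asymp\sum_{st\le Q/2}st\asymp Q^2\ln Q$. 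Hence $\#\mathcal P_2^{**}(Q)\asymp Q^2\ln Q$.

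The genuinely delicate point is the case $n=2$: there the reducible polynomials already carry the same power of $Q$ as the whole family of quadratics with a fixed leading coefficient, so the logarithmic factor must be extracted precisely from both sides — as the harmonic sum $\sum_{s\le Q}s^{-1}$ on the upper side and as the hyperbola sum $\sum_{st\le Q}st$ on the lower side — and one must verify that the comparison between $H(P)$ and $H(Q_1)H(Q_2)$ (Gelfond's inequality plus multiplicativity of the Mahler measure) is two-sided with constants depending only on $n$. For $n\ge3$, the only range used in what follows, the counting is straightforward and the construction with $a_0=0$ already supplies a matching lower bound.
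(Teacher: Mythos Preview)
The paper does not prove this lemma at all: it is simply quoted from \cite[Lemma~1]{K12}, so there is no in-paper argument to compare against. Your proof is therefore an addition rather than an alternative.

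Your argument is correct. The upper bound via Gelfond's inequality $H(Q_1)H(Q_2)\le C_nH(P)$ (obtained through multiplicativity of the Mahler measure and the two-sided comparison $\binom{d}{\lfloor d/2\rfloor}^{-1}H(f)\le M(f)\le\sqrt{d+1}\,H(f)$), followed by the dyadic count $\sum_k Q^{n-k+1}\sum_{s\ll Q}s^{2k-n-1}$, is the standard route and your case split on the exponent $2k-n-1$ is handled correctly, including the borderline $2k=n$ that produces the $\ln Q$ for $n=2$. For the lower bound your construction with $a_0=0$ is the cheapest way to get $\gg Q^n$ reducible polynomials when $n\ge3$, and for $n=2$ the hyperbola count $\sum_{st\le Q/2}st\asymp Q^2\ln Q$ over pairs of primitive linear factors, combined with unique factorisation in $\mathbb Z[x]$ to control multiplicity, gives exactly the matching lower bound.

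Two very minor stylistic remarks. First, in the factorisation step you should say explicitly that one can absorb the content of $P$ into one of the factors so that both lie in $\mathbb Z[x]$; you allude to this (``clear denominators via Gauss's lemma'') but the one-line justification---write $P=c\,Q_1Q_2$ with $c\in\mathbb Z$, $Q_1,Q_2$ primitive in $\mathbb Z[x]$, and replace $Q_1$ by $cQ_1$---would make the upper-bound sum literally an overcount. Second, your parenthetical ``here $n-1\ge2$'' in the lower bound for $n\ge3$ is unnecessary: all that is needed is that both factors are nonconstant, which already holds for $n\ge2$.
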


Let us define the following functions
\begin{gather*}
\nu(X) = \nu(Q,X) := \#\{ P \in \mathcal{P}_3^{*}(Q) : |D(P)| = X \},\\
N^*(X) = N^*(Q,X) := \#\{ P \in \mathcal{P}_3^{*}(Q) : |D(P)| \leq X \} = \sum_{1 \le d \le X} \nu(d),\\
s(X) = s(Q,X) := \sum_{\stackrel{\scriptstyle P \in \mathcal{P}_3^{*}(Q)}{|D(P)| \le X}}|D(P)|^{-1/2}.
\end{gather*}

Firstly, we shall obtain the upper bounds for the functions $N^*(X)$ and $s(X)$, which follow directly from
Davenport's results \cite{Dav61}.
\begin{lemma}
The following upper bounds hold for the functions $N^*(Q,X)$ and $s(Q,X)$:
\begin{gather}
 N^*(Q,X) \ll Q \cdot X^{3/4}, \label{eq_N_est} \\
 s(Q,X) \ll Q \cdot X^{1/4}. \label{eq_s_est}
\end{gather}
\end{lemma}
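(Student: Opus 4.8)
The plan is to pass to binary cubic forms and then quote Davenport's analysis. The homogenisation $P(t)=zt^{3}+yt^{2}+xt+u\mapsto F_{P}(\tau,\chi)=z\tau^{3}+y\tau^{2}\chi+x\tau\chi^{2}+u\chi^{3}$ gives a bijection between $\mathcal{P}_{3}^{*}(Q)$ and the set of integral binary cubic forms that are irreducible over $\mathbb{Q}$ and have height $\le Q$: irreducibility of the form forces its leading coefficient to be nonzero, so it really corresponds to a degree--$3$ polynomial, and $H$ and $|D|$ are preserved (cf.\ \eqref{er6}). Group these forms by $GL_{2}(\mathbb{Z})$--equivalence; since the discriminant is $GL_{2}(\mathbb{Z})$--invariant, each class $O$ has a well--defined $|D(O)|$, and writing $H_{3}(d)$ for the number of classes of irreducible forms with $|D(O)|=d$ we get
\[
N^{*}(Q,X)=\sum_{1\le d\le X}\nu(Q,d),\qquad
\nu(Q,d)=\sum_{|D(O)|=d}\#\{F\in O:H(F)\le Q\}.
\]

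From Davenport \cite{Dav61} I would use two inputs. First, the class--number estimate $\sum_{1\le d\le X}H_{3}(d)\ll X$. Second, his bound, via the reduction theory of binary cubic forms and the geometry of numbers, for the number of forms of bounded height inside a single class: a class with $|D(O)|=d$ has a reduced representative of height $\asymp d^{1/4}$ (any form has height $\gg d^{1/4}$ since $d\le\gamma_{3}H(F)^{4}$, and reduction makes it $\ll d^{1/4}$), and
\[
\#\{F\in O:H(F)\le Q\}\ll Q\,d^{-1/4}\qquad(d\le\gamma_{3}Q^{4};\ \text{larger }d\ \text{contribute nothing}).
\]
Granting these, partial summation against $\sum_{1\le d\le X}H_{3}(d)\ll X$ yields
\[
N^{*}(Q,X)\ll Q\sum_{1\le d\le X}H_{3}(d)\,d^{-1/4}\ll Q\,X^{3/4},
\]
which is \eqref{eq_N_est}.

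Then \eqref{eq_s_est} follows from \eqref{eq_N_est} by Abel summation. With $M=\lfloor X\rfloor$ and $N^{*}(Q,0)=0$,
\[
s(Q,X)=\sum_{1\le d\le M}\nu(Q,d)\,d^{-1/2}
=N^{*}(Q,M)\,M^{-1/2}+\sum_{1\le d\le M-1}N^{*}(Q,d)\bigl(d^{-1/2}-(d+1)^{-1/2}\bigr),
\]
and, using $d^{-1/2}-(d+1)^{-1/2}\ll d^{-3/2}$ together with $N^{*}(Q,d)\ll Q\,d^{3/4}$ from \eqref{eq_N_est}, the right--hand side is $\ll Q\,M^{1/4}+Q\sum_{1\le d\le M}d^{-3/4}\ll Q\,X^{1/4}$, i.e.\ \eqref{eq_s_est}. (One could equally argue in the reverse direction, deducing \eqref{eq_N_est} from \eqref{eq_s_est} and the trivial inequality $N^{*}(Q,X)\le X^{1/2}s(Q,X)$.)

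The only genuine difficulty is the per--class bound $\#\{F\in O:H(F)\le Q\}\ll|D(O)|^{-1/4}Q$, i.e.\ the statement that the height grows fast enough along a $GL_{2}(\mathbb{Z})$--orbit of an irreducible binary cubic form; this is precisely where Davenport's reduction theory and counting arguments in \cite{Dav61} are used, and I would quote them rather than reprove them. The translation to forms, the class--number input, and the two partial summations are routine.
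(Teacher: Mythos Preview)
Your proposal is correct and follows essentially the same route as the paper: translate to irreducible binary cubic forms, invoke Davenport's per--class bound $\nu(d)\ll Q\,(h(d)+h(-d))\,d^{-1/4}$ (your ``per--class height count'' is precisely formula~(5) of \cite{Dav61}), feed in the class--number estimate $\sum_{|d|\le X}h(d)\ll X$ from \cite{Dav51_I,Dav51_II}, and finish by partial summation. The only cosmetic differences are that the paper uses proper ($SL_{2}(\mathbb{Z})$) equivalence rather than $GL_{2}(\mathbb{Z})$, and it derives \eqref{eq_N_est} and \eqref{eq_s_est} in parallel from the bound on $\nu(d)$ rather than deducing \eqref{eq_s_est} from \eqref{eq_N_est}; neither affects the argument.
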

\begin{proof}
The sum $s(X)$ may be written in form
\[
s(X) = \sum_{1\le d \le X} \nu(d)d^{-1/2}.
\]

To apply Davenport's result we need to introduce some terminology and notations.
Following \cite{Dav51_I}, two binary cubic forms with integral coefficients are said to be {\bf properly equivalent},
if one can be transformed into the other by a linear substitution with integral coefficients and determinant 1.

Let $h(D)$ denote the number of classes of properly equivalent irreducible binary cubic forms
that have discriminant $D$.

Using the formula (5) from \cite{Dav61}, we obtain
\[
\nu(d) \ll Q \cdot (h(d)+h(-d)) \cdot d^{-1/4}.
\]
This result yields the following estimates for $N^*(X)$ and $s(X)$
\begin{gather*}
N^*(X) \ll Q \sum_{1\le d \le X} (h(d)+h(-d)) \cdot d^{-1/4},\\
s(X) \ll Q \sum_{1\le d \le X} (h(d)+h(-d)) \cdot d^{-3/4}.
\end{gather*}
It was proved in \cite{Dav51_I, Dav51_II} (see formulas (3) and (1) respectively) that
\[
\sum_{1 \le |d| \le X} h(d) \ll X.
\]
Thus, by partial summation we obtain the bounds \eqref{eq_N_est} and \eqref{eq_s_est}.
Note that this lemma is a direct extension of Davenport's result (see formula (3) from~\cite{Dav61}).
\end{proof}

Now using Theorem \ref{thm1}, we shall get the asymtotic formula for the function $N^*(X)$.
\begin{lemma}\label{lm_NX_asmp}
For $X$ satisfying $c_2 Q^{14/5} \le X \le \gamma_3 Q^4$, where $c_2$ is an absolute constant,
and sufficiently large $Q$,
the function $N^*(Q, X)$ has the asymptotic formula:
\begin{equation}\label{eq_NX_asmp}
N^*(Q,X) = \kappa\, Q^{2/3} \, X^{5/6} + O\left(X\left|\ln\left(Q^4 / X\right)\right| + X + Q^3\right),
\end{equation}
where the absolute constant $\kappa$ is the same as in Theorem \ref{thm1}.
\end{lemma}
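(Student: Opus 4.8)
The plan is to derive the asymptotic formula for $N^*(Q,X)$, the count of \emph{irreducible} cubic polynomials with height at most $Q$ and discriminant bounded by $X$, from the unconditional asymptotic for $N(Q,X)$ in Theorem~\ref{thm1} by subtracting off the reducible ones. Write $N(Q,X) = N^*(Q,X) + N^{**}(Q,X)$, where $N^{**}(Q,X) := \#\{P \in \mathcal{P}_3^{**}(Q) : |D(P)| \le X\}$. Since $N^{**}(Q,X) \le \#\mathcal{P}_3^{**}(Q)$, Lemma~\ref{lm_P**} with $n=3$ gives the trivial bound $N^{**}(Q,X) \ll Q^3$. Substituting this and the formula from Theorem~\ref{thm1} yields
\[
N^*(Q,X) = \kappa\, Q^{2/3} X^{5/6} + O\!\left(X\left|\ln(Q^4/X)\right| + X + Q^3\right),
\]
which is exactly \eqref{eq_NX_asmp}. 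So the entire content of the lemma is this one-line subtraction, and the error term is inherited verbatim from Theorem~\ref{thm1} (with the $Q^3$ term now absorbing both the lattice-point error and the reducible polynomials).

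The only real point to check is where the stated range $c_2 Q^{14/5} \le X \le \gamma_3 Q^4$ comes from and why it matters. The lower bound $X \gg Q^{14/5}$ is not needed for \eqref{eq_NX_asmp} to be a true equation — that holds for all $X \le Q^4/27$ — but it is needed for the formula to be \emph{informative}, i.e.\ for the main term $\kappa Q^{2/3} X^{5/6}$ to dominate the error term $O(Q^3)$. Indeed $Q^{2/3} X^{5/6} \gg Q^3$ is equivalent to $X^{5/6} \gg Q^{7/3}$, i.e.\ $X \gg Q^{14/5}$; this explains the exponent $14/5$ and the absolute constant $c_2$ (chosen so that the implied constant in Theorem~\ref{thm1}'s $O(Q^3)$ term is beaten). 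One should also note that in this range the term $X|\ln(Q^4/X)|$ is $O(X \ln Q)$ and the term $X$ is dominated, so the error is effectively $O(X\ln Q + Q^3)$, with the main term still dominating whenever $X = o(Q^{3.6}/\ln Q)$ or so; but since we only claim an asymptotic equality, no such restriction is imposed in the statement.

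There is essentially no obstacle here: the lemma is a bookkeeping step that repackages Theorem~\ref{thm1} together with the crude count of reducible cubics. The one subtlety worth a sentence in the write-up is that we are using the \emph{upper} bound $\#\mathcal{P}_3^{**}(Q) \ll Q^3$ from Lemma~\ref{lm_P**}, which is more than enough since it matches the pre-existing $O(Q^3)$ error, so the reducible polynomials never affect the shape of the asymptotic. The genuinely substantive work — the measure computation in Lemma~\ref{prop1}, the surface-measure asymptotics $\sigma(\delta) = c_1\delta^{5/6} + O(\delta)$ from Lemma~\ref{sgm_up_bound}, and the application of Davenport's lattice-point theorem (Theorem~\ref{thm_intp_num}) — has already been done in proving Theorem~\ref{thm1}; this lemma merely transfers that result to the irreducible sub-count, which is the version actually needed to feed into the partial-summation argument for Theorem~\ref{thm2}.
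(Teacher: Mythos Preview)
Your proposal is correct and matches the paper's own proof essentially line for line: the paper also simply invokes Lemma~\ref{lm_P**} to bound $\#\mathcal{P}_3^{**}(Q) \asymp Q^3$ and then appeals to Theorem~\ref{thm1}. Your additional commentary on the role of the threshold $X \gg Q^{14/5}$ is accurate and more explicit than the paper, which leaves that point implicit.
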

\begin{proof}
Lemma \ref{lm_P**} gives $\# \mathcal{P}_3^{**}(Q) \asymp Q^3$. Thus, with the use of Theorem \ref{thm1}, we have \eqref{eq_NX_asmp}.
\end{proof}

We will denote
\[
S(Q,X) = \sum_{\stackrel{\scriptstyle P \in \mathcal{P}_3^{*}(Q)}{|D(P)|\le X}}|D(P)|^{-1/2}.
\]
It naturally follows that
\[
S(Q,X) = \sum_{1\le d \le X}\nu(d)d^{-1/2}.
\]
The sum can be now split into two parts: $S(Q,X) = S_1(Q) + S_2(Q,X)$, where
\[
S_1(Q) = \sum_{1\le d \le c_2 Q^{14/5}}\nu(d)d^{-1/2}, \qquad
S_2(Q,X) = \sum_{c_2 Q^{14/5} < d \le X}\nu(d)d^{-1/2}.
\]

The upper bound for $S_1(Q)$ follows from \eqref{eq_s_est}
\[
S_1(Q) = s(Q, c_2 Q^{14/5}) \ll Q^{17/10}.
\]

Let us estimate $S_2(Q,X)$ by partial summation. We have
\begin{multline*}
\sum_{d = D_1}^{D_2}\nu(d)d^{-1/2} = \sum_{d = D_1}^{D_2}(N^*(d)-N^*(d-1))d^{-1/2} =\\
= \frac{N^*(D_2)}{\sqrt{D_2+1}}-\frac{N^*(D_1-1)}{\sqrt{D_1}}+\sum_{d = D_1}^{D_2}N^*(d)(d^{-1/2}-(d+1)^{-1/2}),
\end{multline*}
For any $d\ge 2$ we have
\[
(d+1)^{-1/2} = d^{-1/2} - \frac12 d^{-3/2} + O(d^{-5/2}).
\]
Assumming $D_1 = c_2 Q^{14/5}$ and $D_2 = X$, we obtain:
\begin{gather*}
\frac{N^*(X)}{\sqrt{X+1}} = \kappa\, Q^{2/3} \, X^{1/3} + O\left(X^{1/2}\left|\ln\left(Q^4 / X\right)\right| + X^{1/2} + Q^3 X^{-1/2}\right),\\
\frac{N^*(D_1-1)}{\sqrt{D_1}} \ll Q^{3-7/5} \ll Q^{8/5}.
\end{gather*}
Using Lemma \ref{lm_NX_asmp}, we obtain
\begin{multline*}
\sigma = \sum_{d = D_1}^{D_2}N^*(d)(d^{-1/2}-(d+1)^{-1/2}) = 
\frac{\kappa}{2} Q^{2/3} \cdot \sum_{d = D_1}^{D_2}d^{-2/3} \ + \\
+ O\left(\sum_{d=D_1}^{D_2}\left(d^{-1/2}\left|\ln\left(Q^4 / d\right)\right| + d^{-1/2} + Q^3 d^{-3/2} + Q^{2/3} d^{-5/3} \right)\right).
\end{multline*}
Since
\[
\sum_{d = D_1}^{X}d^{-2/3} = \frac13 X^{1/3} + O(D_1^{1/3} + X^{-2/3}),
\]
we have
\[
\sigma = \frac{\kappa}{6} Q^{2/3} X^{1/3} + O\left(X^{1/2}\left|\ln\left(Q^4 / X\right)\right| + X^{1/2} + Q^{8/5}\right).
\]
Theorem \ref{thm2} is proved.

\section{Some applications}

In this section, we give an illustration how some results on Hausdorff's dimension and Mahler's
Problem for cubic polynomials could be obtained from our estimates. These results are
well-known and have been solved \cite{BS, Ber83, Vol}, but in due time, it was hard problems. We show here
how these problems could be solved in cubic case by simple using of our result.

\noindent\textbf{Mahler's Conjecture \cite{S67}.} \textit{
Let $\mathcal{L}_{n}(w)$ be the set of real numbers such that the inequality
\begin{equation}\label{ap1}
 |P(x)|<H(P)^{-w},~w>n,
\end{equation}
has infinitely many solutions in integral polynomials with $\deg P\le n$. Then
\[
\mes_1 \mathcal{L}_{n}(w)=0.
\]
}

Let $x\in\mathcal{L}_3(w)$, and $P\in\mathbb{Z}[x]$, $\deg P\le n$, be a solution of \eqref{ap1}.
If $\alpha_{1}$ is the root of $P(x)$ closest to $x$, then it is known (see \cite{S67}) that
\begin{equation}\label{ap2}
 |x-\alpha_{1}|<6H(P)^{-w}|D(P)|^{-\frac{1}{2}}.
\end{equation}

Let $\mathcal{L}(t,w)$ be the set of real numbers $x$ such that the inequality \eqref{ap1} has solutions in polynomials belonging to the class
\[
 \mathcal{P}_{t}=\{ P \in \mathcal{P}_{3}^{*}(2^t) \ : \ 2^{t-1} < H(P) \},
\]
where the set $\mathcal{P}_{3}^{*}(Q)$ is defined according to \eqref{eq_P*Q}.

Let us cover the set $\mathcal{L}(t,w)$ by intervals
$I_{\alpha} = \{x\in\mathbb{R} : |x-\alpha|<6H(P)^{-w}|D(P)|^{-\frac{1}{2}}\}$,
where $P\in \mathcal{P}_t$, and $\alpha$ is a real root of $P$.
Let us consider the series
\begin{multline*}
 S_{1}:=\sum_{t=1}^{\infty}
\sum_{\substack{
P\in\mathcal{P}_{t}\\
 c_2 2^{2.8t}<|D(P)|
}}
\!\!\left(2^{-(t-1)w}|D(P)|^{-\frac{1}{2}}\right)^{\frac{4}{w+1}} \ll \\
\ll \,
\sum_{t=1}^{\infty} 2^{-\frac{4w}{w+1}t} \!\sum_{c_2 2^{2.8t} < D \le \gamma_3 2^{4t}} N^*(D) D^{-1-\frac{2}{w+1}}.
\end{multline*}

By Lemma \ref{lm_NX_asmp}, we have $N^*(2^t, D) \ll 2^{2t/3} D^{5/6}$. This implies
\[
 S_{1} \ll \sum_{t=1}^{\infty} 2^{t\left(-\frac{4w}{w+1}+4-\frac{8}{w+1}\right)} =
\sum_{t=1}^{\infty} 2^{-\frac{4}{w+1} t} < \infty.
\]

Considering the case  when $|D(P)| \le c_2 2^{2.8t}$, we use the trivial upper bound $N^*(2^t, D) \ll 2^{3t}$. Hence, the series
\begin{multline*}
 S_{2}:=
\sum_{t=1}^{\infty}
\sum_{\substack{
P\in\mathcal{P}_{t}\\
 |D(P)|\le c_2 2^{2.8t}
}
}
\!\!\left(2^{-(t-1)w}|D(P)|^{-\frac{1}{2}}\right)^{\frac{4}{w+1}} \ll \\
\ll \,
\sum_{t=1}^{\infty} 2^{-\frac{4w}{w+1}t} \!\!\sum_{1 \le D \le c_2 2^{2.8t}} \!2^{3t} D^{-1-\frac{2}{w+1}} \ll
\sum_{t=1}^{\infty} 2^{\frac{3-w}{w+1}t}
\end{multline*}
converges for $w > n$.

Thus, for the Hausdorff dimension we have $\dim_{H} \mathcal{L}_{3}(w)\le \frac{4}{w+1}$ (that agrees with the fact that $\dim_{H} \mathcal{L}_{n}(w)= \frac{n+1}{w+1}$, see \cite{BS,Ber83}).
This leads to  $\mes_1 \mathcal{L}_{3}(w)=0$.

\subsection*{Acknowledgements}
The first and third authors would like to thank the University of Bielefeld, where a substantial part of this work was done, for providing a stimulating research environment during their visits supported by SFB 701.

\bigskip

{\small Dzianis Kaliada}\\
{\footnotesize
{Institute of Mathematics, National Academy of Sciences of Belarus,\\
220072 Minsk, Belarus}\\
E-mail: koledad@rambler.ru
}

\bigskip

{\small Friedrich G\"otze}\\
{\footnotesize
{University of Bielefeld,\\
33501, Bielefeld, Germany}\\
E-mail: goetze@math.uni-bielefeld.de
}

\bigskip

{\small Olga Kukso}\\
{\footnotesize
{Institute of Mathematics, National Academy of Sciences of Belarus,\\
220072 Minsk, Belarus}\\
E-mail: olga\_kukso@tut.by
}

\end{document}